%

\documentclass{conm-p-l}

\usepackage{amsmath}
\usepackage{amsthm}
\usepackage{amssymb}
\usepackage[all]{xy}

\DeclareMathSymbol{\rightrightarrows}  {\mathrel}{AMSa}{"13}

\def\Shv{\mathbf{Shv}}
\def\Tot{\operatorname{Tot}}

\def\Pre{\operatorname{Pre}}

\def\Ex{\operatorname{Ex}}

\def\Sp{\operatorname{Sp}}

\catcode`\@=11
\def\varholim@#1#2{\mathop{\vtop{\ialign{##\crcr
 \hfil$#1\m@th\operator@font holim$\hfil\crcr
 \noalign{\nointerlineskip\kern\ex@}#2#1\crcr
 \noalign{\nointerlineskip\kern-\ex@}\crcr}}}}
\def\hocolim{\mathpalette\varholim@\rightarrowfill@} 
\def\hoinvlim{\mathpalette\varholim@\leftarrowfill@}
\catcode`\@=\active

\newtheorem{theorem}{Theorem}
\newtheorem{lemma}[theorem]{Lemma}
\newtheorem{corollary}[theorem]{Corollary}

\theoremstyle{definition}

\newtheorem{example}[theorem]{Example}

\newtheorem{remark}[theorem]{Remark}


\begin{document}

\title{Galois descent criteria}
                                                                                \author{J.F. Jardine}
 
 

\thanks{This research was supported by NSERC.}                                                                                
 
\maketitle

\begin{abstract}
\noindent
This paper gives an introduction to homotopy descent, and its applications in algebraic $K$-theory computations for fields. On the \'etale site of a field, a fibrant model of a simplicial presheaf can be constructed from naive Galois cohomological objects given by homotopy fixed point constructions, but only up to pro-equivalence. The homotopy fixed point spaces define finite Galois descent for simplicial presheaves (and their relatives) over a field, but a pro-categorical construction is a necessary second step for passage from finite descent conditions to full homotopy descent in a Galois cohomological setting.
\end{abstract}

\section*{Introduction}

Descent theory is a large subject, which appears in many forms in geometry, number theory and topology.

Initially, it was a set of methods for constructing global features of
a ``space'' from a set of local data that satisfies patching
conditions, or for defining a variety over a base field from a variety
over a finite separable extension that comes equipped with some type
of cocycle. The latter field of definition problem appears in early
work of Weil \cite{Weil}; it was later subsumed by a general approach
of Grothendieck \cite{FGA} in the theory of faithfully flat descent.

The early descriptions of patching conditions were later generalized
to isomorphisms of structures on patches which are defined up to coherent
isomorphism, in the formulation of the notion of effective descent
that one finds in the theory of stacks and, more generally, higher
stacks \cite{Simpson-descent}.
\medskip

Cohomological descent is a spectral sequence
technique for computing the cohomology of a ``space'' $S$ from the
cohomology of the members of a covering. The theory is discussed in
detail in SGA4, \cite[Exp. Vbis]{SGA4-2}, while the original
spectral sequence for an ordinary covering was introduced by Godement \cite{Godement}.

The construction of the descent spectral sequence for a covering $U \to S$ (sheaf epimorphism) starts with a \v{C}ech resolution $\check{C}(U) \to S$ for the covering and an injective resolution $A \to I^{\bullet}$ of a coefficient abelian sheaf $A$. One forms the third quadrant bicomplex
\begin{equation}\label{eq 1}
  \hom(\check{C}(U),I^{\bullet}),
\end{equation}
and the resulting spectral sequence converges to sheaf cohomology $H^{\ast}(S,A)$, with the form
\begin{equation}\label{eq 2}
  E_{2}^{p,q}=H^{p}(H^{q}(\check{C}(U),A)) \Rightarrow H^{p+q}(S,A).
\end{equation}

It is a more recent observation that one recovers $H^{\ast}(S,A)$ from the bicomplex (\ref{eq 1}) since the resolution $\check{C}(U) \to S$ is a stalkwise weak equivalence of simplicial sheaves. The \v{C}ech resolution $\check{C}(U)$ is a simplicial object which is made up of the components of the covering $U$ and their iterated intersections.

The variation of the descent spectral sequence that is discussed in
SGA4 is constructed by replacing the \v{C}ech resolution by a
hypercover $V \to S$. In modern terms, a hypercover is a local trivial
fibration of simplicial sheaves, but such a map was initially defined to be a simplicial scheme over $S$
which satisfied a set of local epimorphism conditions defined by its
coskeleta \cite{AM}.

The key observation for these constructions is that, if $X \to Y$ is a stalkwise weak equivalence of simplicial sheaves (or presheaves), then the induced map of bicomplexes
\begin{equation}\label{eq 3}
  \hom(Y,I^{\bullet}) \to \hom(X,I^{\bullet})
\end{equation}
induces a cohomology isomorphism of total complexes. Thus, one has a definition
\begin{equation*}
  H^{n}(X,A) := H^{n}(\Tot\hom(X,I^{\bullet}))
\end{equation*}
of the cohomology of a simplicial presheaf $X$ with coefficients in an abelian sheaf $A$ that is independent of the stalkwise homotopy type of $X$, along with a spectral sequence that computes it. 
\medskip

Descent theory became a homotopy theoretic pursuit with the
introduction of local homotopy theories for simplicial presheaves and
sheaves, and presheaves of spectra. These homotopy theories evolved from ideas of Grothendieck; their formalization essentially began with Illusie's thesis \cite{Ill01}.

Local homotopy theories are Quillen model structures: a local weak
equivalence of simplicial presheaves or sheaves is a map which induces
weak equivalences at all stalks, and a cofibration is a
monomorphism. The local homotopy theory of presheaves of spectra is
constructed from the homotopy theory of simplicial presheaves by using
methods of Bousfield and Friedlander \cite{BF}. The fibrations for these
theories are now commonly called injective fibrations.

In the setup for the cohomological descent spectral sequence
(\ref{eq 2}), the injective resolution $I^{\bullet}$ ``satisfies descent'',
in that it behaves like an injective fibrant object, with the result
that a local weak equivalence $X \to Y$ induces a quasi-isomorphism
(\ref{eq 3}). Homotopical descent theory is the study of simplicial
objects and spectrum objects that are nearly injective fibrant.
\medskip

One says that a simplicial presheaf $X$ {\it satisfies descent} (or {\it homotopy descent}) if any local weak equivalence $X \to Z$ with $Z$ injective fibrant is a sectionwise weak equivalence, in the sense that the maps $X(U) \to Z(U)$ are weak equivalences of simplicial sets for all objects $U$ in the underlying site.

This form of descent is a statement about the sectionwise behaviour of simplicial {\it presheaves}, or {\it presheaves} of spectra, and is oriented towards computing homotopy groups in sections. The role of sheaves is incidental, except in the analysis of local behaviour.

There are many examples:
\medskip

\noindent
1)\
Every local weak equivalence $Z \to Z'$ of injective fibrant objects is a sectionwise equivalence by formal nonsense (discussed below), so that all injective fibrant objects satisfy descent.
\medskip

\noindent
2)\
One can show \cite[Sec. 9.2]{LocHom} that a sheaf of groupoids $G$ is a stack (i.e. satisfies the effective descent condition) if and only if its nerve $BG$ satisfies descent.
\medskip

The advantage of having an object $X$ which satisfies descent is that there are machines (e.g. Postnikov tower, or Godement resolution) that can be used to produce a spectral
sequence 
\begin{equation}\label{eq 4}
  E_{2}^{s,t} = H^{s}(S,\tilde{\pi}_{t}X)\ ``\Rightarrow"\ \pi_{t-s}(X(S))
\end{equation}
which computes the homotopy groups of the space $X(S)$ in global sections from sheaf cohomology for $S$ with coefficients in the homotopy group sheaves of $X$.  This is the homotopy descent spectral sequence. 

The spectral sequence (\ref{eq 4}) is a Bousfield-Kan spectral sequence for a tower of fibrations, so convergence can be a problem, and there may also be a problem with knowing what it converges to. Both issues are circumvented in practice by insisting on a global bound on cohomological dimension --- see Section 4.
\medskip

The availability of a calculational device such as (\ref{eq 4}) for objects $X$ which satisfy descent means that the hunt is on for such objects, for various topologies and in different contexts.

The algebraic $K$-theory presheaf of spectra $K$, for example, satisfies descent
for the Nisnevich topology on the category $Sm\vert_{S}$ of smooth $S$-schemes, where $S$ is a regular Noetherian scheme of finite dimension. This follows from
the existence of localization sequences in $K$-theory for such schemes, so that the $K$-theory presheaf satisfies a ``$cd$-excision'' property.

A general result of Morel and Voevodsky \cite{MV}, \cite[Thm. 5.39]{LocHom} says that any simplicial presheaf on $Sm\vert_{S}$ that satisfies the $cd$-excision property satisfies Nisnevich descent. The proof of the Morel-Voevodsky theorem is based on an earlier theorem of Brown and Gersten, which gives a descent criterion for simplicial presheaves on the standard site of open subsets of a Noetherian topological space. The descent criterion of Brown-Gersten amounts to homotopy cartesian patching for pairs of open subsets.

The arguments for the Morel-Voevodsky and Brown-Gersten descent theorems are geometric and subtle, and depend strongly on the ambient Grothen\-dieck topologies. Descent theorems are interesting and important geometric results,
and finding one of them is a major event. 
\medskip

Homotopy descent problems originated in algebraic $K$-theory, in the complex of problems related to the Lichtenbaum-Quillen conjecture.

Suppose that $k$ is a field, that $\ell$ is a prime number which is distinct from the characteristic of $k$.
The mod $\ell$ algebraic $K$-theory presheaf of spectra $K/\ell$ on smooth $k$-schemes is the cofibre of multiplication by $\ell$ on the algebraic $K$-theory presheaf $K$, and the stable homotopy groups $\pi_{p}K/\ell(k)$ are the mod $\ell$ $K$-groups $K_{p}(k,\mathbb{Z}/\ell)$ of the field $k$. The presheaf of spectra $K/\ell$ has an injective fibrant model $j: K/\ell \to LK/\ell$ for the \'etale topology on $k$, and the stable homotopy groups $\pi_{p}LK/\ell(k)$ are the \'etale $K$-groups $K_{p}^{et}(k,\mathbb{Z}/\ell)$ of $k$. The map $j$ induces a comparison
\begin{equation}\label{eq 5}
  K_{p}(k,\mathbb{Z}/\ell) \to K_{p}^{et}(k,\mathbb{Z}/\ell)
\end{equation}
in global sections, and the Lichtenbaum-Quillen conjecture asserts that this map is an isomorphism in the infinite range of degrees above the Galois $\ell$-cohomological dimension of $k$, which dimension is assumed to be finite.

The point of this conjecture is that algebraic $K$-theory with torsion coefficients should be computable from \'etale (or Galois) cohomology. At the time that it was formulated, the conjecture was a striking leap of faith from calculations in low degrees. The precise form of the conjecture that incorporates the injective fibrant model $j: K/\ell \to LK/\ell$ followed much later.

Thomason's descent theorem for Bott periodic $K$-theory \cite{AKTEC} was a first approximation to Lichtenbaum-Quillen. His theorem says that formally inverting the Bott element $\beta$ in $K_{\ast}(k,\mathbb{Z}/\ell)$ produces a presheaf of spectra $K/\ell(1/\beta)$ which satisfies descent for the \'etale topology on the field $k$. \'Etale $K$-theory is Bott periodic, so that the spectrum object $K/\ell(1/\beta)$ is a model for the \'etale $K$-theory presheaf.

The Lichtenbaum-Quillen conjecture was proved much later --- it is a consequence of the Bloch-Kato conjecture, via the Beilinson-Lichtenbaum conjecture \cite{SV2}, while Voevodsky's proof of Bloch-Kato appears in \cite{Voev4}.
\medskip

Voevodsky's work on Bloch-Kato depended on the introduction and use of motivic techniques, and was a radical departure from the methods that were used in attempts to calculate the $K$-theory of fields up to the mid 1990s.

Before Voevodsky, the general plan for showing that the \'etale descent spectral sequence converged to the algebraic $K$-theory of the base field followed the methods of Thomason, and in part amounted to attempts to mimic, for $K$-theory, the observation that the Galois cohomology of a field $k$ can be computed from \v{C}ech cohomology.
By the time that Thomason's paper \cite{AKTEC} appeared, the $E_{2}$-term of the \'etale descent spectral sequence for the $K$-theory of fields was known from Suslin's calculations of the $K$-theory of algebraically closed fields \cite{Su1}, \cite{Su2}.

In modern terms, the relationship between Galois cohomology and \v{C}ech cohomology for a field $k$ has the form of an explicit isomorphism
\begin{equation}\label{eq 6}
  H^{p}_{Gal}(k,A) \xrightarrow{\cong} \varinjlim_{L/k}\ H^{p}\hom(EG \times_{G} \Sp(L),A),
  \end{equation}
which is defined for any abelian sheaf $A$ on the \'etale site for $k$. Here, $L$ varies through the finite Galois extensions of $k$, and we write $G = Gal(L/k)$ for the Galois group of such an extension $L$. Here, the scheme $\Sp(L)$ is the Zariski spectrum of the field $L$.

The simplicial sheaf $EG \times_{G} \Sp(L)$ is the Borel construction for the action of $G$ on the \'etale sheaf represented by the $k$-scheme $\Sp(L)$, and is isomorphic to
the \v{C}ech resolution for the \'etale cover $\Sp(L) \to \Sp(k)$.

The complex $\hom(EG \times_{G} \Sp(L),A)$ has $n$-cochains given by
\begin{equation*}
  \hom(EG \times_{G} \Sp(L),A)^{n} = \prod_{G^{\times n}}\ A(L),
\end{equation*}
and is the homotopy fixed points complex for the action of $G$ on the abelian group $A(L)$ of $L$-points of $A$.

It is a critical observation of Thomason that if $B$ is an abelian presheaf which is additive in the sense that it takes finite disjoint unions of schemes to products, then there is an isomorphism
\begin{equation}\label{eq 7}
  H^{p}_{Gal}(k,\tilde{B}) \cong \varinjlim_{L/k}\ H^{p}\hom(EG \times_{G} \Sp(L),B),
  \end{equation}
which computes cohomology with coefficients in the associated sheaf $\tilde{B}$ from the presheaf-theoretic cochain complexes $\hom(EG \times_{G} \Sp(L),B)$.

The $K$-theory presheaf of spectra $K/\ell$ is additive, and it's still a leap, but one could hope that the analogous comparison map of spectra
\begin{equation}\label{eq 8}
  K/\ell(k) \to \varinjlim_{L/k}\ \mathbf{hom}(EG \times_{G} \Sp(L),K/\ell)
\end{equation}
induces an isomorphism in stable homotopy groups in an appropriate range, and that the colimit on the right would be equivalent to the mod $\ell$ \'etale $K$-theory spectrum of the field $k$.

There were variations of this hope. The map (\ref{eq 8}) is a colimit of the comparison maps
\begin{equation}\label{eq 9}
  K/\ell(k) \to \mathbf{hom}(EG \times_{G} \Sp(L),K/\ell),
\end{equation}
and one could ask that each such map induces an isomorphism in homotopy groups in an appropriate range.

The function complex spectrum $\mathbf{hom}(EG \times_{G} \Sp(L),K/\ell)$ is the homotopy fixed points spectrum for the action of the Galois group $G$ on the spectrum $K/\ell(L)$, and the question of whether or not (\ref{eq 9}) is a weak equivalence is commonly called a homotopy fixed points problem. It is also a finite descent problem.
\medskip

There were many attempts to solve homotopy fixed points problems for algebraic $K$-theory in the pre-motives era, with the general expectation that the question of identifying the colimit in (\ref{eq 8}) with the \'etale $K$-theory spectrum should then take care of itself.

The identification problem, however, turned out to be hard. Attempts
to address it invariably ended in failure, and always involved the ``canonical mistake'', which is the false assumption that inverse limits commute with filtered colimits.
\medskip

It is a technical application of the methods of this paper that the identification of the colimit
\begin{equation*}
\varinjlim_{L/k}\ \mathbf{hom}(EG \times_{G} \Sp(L),K/\ell)  
\end{equation*}
with the \'etale $K$-theory spectrum
cannot work out, except in a suitable pro category.

This is expressed in more abstract terms as Theorem \ref{th 24} below for a certain class of simplicial presheaves on the \'etale site for $k$. The mod $\ell$ $K$-theory presheaf of spectra $K/\ell$, or rather its component level spaces $(K/\ell)^{n}$, are examples of such objects.
\medskip


The main body of this paper is set in the context of simplicial presheaves and sheaves on the site $G-\mathbf{Set}_{df}$ of discrete finite $G$-sets for a profinite group $G$ and their $G$-equivariant maps.  The coverings for this site are the surjective maps.

Explicitly, a profinite group is a functor $G: I \to \mathbf{Grp}$, with $i \mapsto G_{i}$ for objects $i$ in the index category $I$. The category $I$ is small and left filtered, and the functor $G$ takes values in finite groups. We also require (following Serre \cite{Serre-CG2}) that all transition morphisms $G_{i} \to G_{j}$ are surjective. All Galois groups have these general properties.

If $k$ is a field, then the finite \'etale site is equivalent to the site $G-\mathbf{Set}_{df}$ for the absolute Galois group $G$ of $k$, via imbeddings of finite separable extensions of $k$ in its algebraic closure.

Until one reaches the specialized calculations of Section 4, everything that is said about simplicial presheaves and presheaves of spectra on \'etale sites of fields is a consequence of general results about the corresponding objects associated to the sites $G-\mathbf{Set}_{df}$ for profinite groups $G$. 

The local homotopy theory for general profinite groups was first explicitly described by Goerss \cite{Goerss1}, and has since become a central structural component of the chromatic picture of the stable homotopy groups of spheres.

This paper proceeds on a separate track, and reflects the focus on generalized Galois cohomology and descent questions which arose in algebraic $K$-theory, as partially described above. See also \cite{GECT}. 
\medskip

Some basic features of the local homotopy theory for profinite groups are recalled in Section 1. We shall also use results about cosimplicial spaces that are displayed in Section 2.

With this collection of techniques in hand, we arrive at the following:

\begin{theorem}\label{th 1}
Suppose that $f: X \to Y$ is a local weak equivalence between
presheaves of Kan complexes on the site $G-\mathbf{Set}_{df}$ such that $X$
and $Y$ have only finitely many non-trivial presheaves of homotopy
groups. Then the induced map
\begin{equation*}
f_{\ast}: \varinjlim_{i}\ \mathbf{hom}(EG_{i} \times_{G_{i}} G_{i},X) \to 
\varinjlim_{i}\ \mathbf{hom}(EG_{i} \times_{G_{i}} G_{i},Y)
\end{equation*}
is a weak equivalence.
\end{theorem}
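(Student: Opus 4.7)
The plan is to induct on the total number of non-trivial presheaves of homotopy groups of $X$ and $Y$, using the Moore--Postnikov tower to reduce to the case of (twisted) Eilenberg--Mac Lane presheaves, and then to invoke Thomason's comparison (\ref{eq 7}) between Galois cohomology and the filtered colimit of presheaf-level cochain cohomologies.

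The base case, in which neither object has any non-trivial homotopy groups, is immediate since both presheaves are sectionwise contractible. For the inductive step, consider the Moore--Postnikov truncations $P_{n}X \to P_{n}Y$ and their homotopy fibres $K(\tilde{\pi}_{n}X, n)$ and $K(\tilde{\pi}_{n}Y, n)$. Since $\mathbf{hom}(EG_{i} \times_{G_{i}} G_{i}, -)$ preserves fibration sequences of presheaves of Kan complexes, and filtered colimits of simplicial sets preserve homotopy fibre sequences, a comparison of long exact sequences of homotopy groups together with the five-lemma reduces the claim to the case where $f$ is a local weak equivalence of Eilenberg--Mac Lane presheaves $K(A,n) \to K(B,n)$ induced by a local isomorphism of abelian presheaves $A \to B$.

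In this Eilenberg--Mac Lane case, the cosimplicial space techniques of Section 2 identify
\begin{equation*}
\pi_{q}\mathbf{hom}(EG_{i} \times_{G_{i}} G_{i},K(A,n)) \cong H^{n-q}\hom(EG_{i} \times_{G_{i}} G_{i},A).
\end{equation*}
The filtered colimit over $i$ of the right-hand side is, by Thomason's theorem (\ref{eq 7}), the Galois cohomology $H^{n-q}_{Gal}(G,\tilde{A})$ of the associated sheaf. Since $A \to B$ is a local isomorphism, the sheafifications are isomorphic, so the induced map on Galois cohomology groups is an isomorphism, and this gives the desired weak equivalence of filtered colimits of mapping spaces.

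The main obstacle is the correct treatment of twisted coefficients: the Postnikov fibres $K(\tilde{\pi}_{n}X, n)$ carry actions of the lower homotopy sheaves, so the ``Eilenberg--Mac Lane reduction'' has to be executed with local systems rather than with honest Eilenberg--Mac Lane objects. A second delicate point is that the homotopy fixed point construction for a fixed index $i$ does not by itself compute Galois cohomology --- it only detects presheaf-level data --- and the additivity argument underlying Thomason's isomorphism (\ref{eq 7}) is essential to ensure that the filtered colimit over $i$ stabilizes onto the correct sheaf-cohomological invariant.
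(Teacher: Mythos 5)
Your route (Postnikov induction down to Eilenberg--Mac Lane objects, then Thomason's comparison (\ref{eq 7}) in the colimit) is genuinely different from the paper's proof, which never decomposes $X$ and $Y$: there $f$ is replaced by an injective fibration of injective fibrant \emph{diagrams}, Lemma \ref{lem 7} makes the maps $\hom((EG_{i}\times_{G_{i}}G_{i})_{\bullet},X)\to\hom((EG_{i}\times_{G_{i}}G_{i})_{\bullet},Y)$ Bousfield--Kan fibrations, and the colimit map is shown to be a trivial fibration by solving the cosimplicial lifting problems stage by stage after refining along transition morphisms $\gamma\colon G_{j}\to G_{i}$ (every surjection of finite discrete $G$-sets onto $\sqcup_{G_{i}^{\times n}}G_{i}$ is refined by one induced by the pro-structure), with only finitely many obstructions to solve because of Lemma \ref{lem 11} and the sectionwise $n$-type hypothesis. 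The most serious gap in your argument sits exactly where you lean on (\ref{eq 7}): that isomorphism is stated, and proved, for \emph{additive} presheaves, since only then is $\hom((EG_{i}\times_{G_{i}}G_{i})_{m},A)=A(\sqcup_{G_{i}^{\times m}}G_{i})$ identified with $\prod_{G_{i}^{\times m}}A(G_{i})$, i.e.\ with genuine cochains on $G_{i}$. The homotopy group presheaves $\pi_{n}X$ of an arbitrary presheaf of Kan complexes are not additive (a constant presheaf already fails, and for it the cochain complex $\hom((EG_{i}\times_{G_{i}}G_{i})_{\bullet},A)$ has cohomology $A$ concentrated in degree $0$, not the Galois cohomology of $\tilde{A}$). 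So you cannot simply quote (\ref{eq 7}); what your Eilenberg--Mac Lane step really needs is an argument that the filtered colimit of the presheaf-level cochain complexes depends only on suitably local data, and producing that is precisely the refinement-along-$\gamma\colon G_{j}\to G_{i}$ cofinality argument that constitutes the core of the paper's proof and is absent from yours. You flag additivity as ``essential'' but never verify it for the coefficients your reduction produces, and this is where the argument, as written, breaks.

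The second gap is that the reduction itself is only sketched at the points where it is delicate. Once you admit that the Moore--Postnikov stages are twisted principal fibrations, the clean identification $\pi_{q}\mathbf{hom}(EG_{i}\times_{G_{i}}G_{i},K(A,n))\cong H^{n-q}\hom(EG_{i}\times_{G_{i}}G_{i},A)$ is no longer available: you need cohomology with local coefficients, separate (non-abelian) treatments of $\pi_{0}$ and $\pi_{1}$, and careful basepoint and component bookkeeping in the comparison of long exact sequences, since the fibre of the induced map of homotopy fixed point objects over a component of the base not represented by an equivariant vertex is not the homotopy fixed point object of the fibre; carrying this out is in effect an obstruction-theoretic argument, which the paper runs directly (and in one pass) via the lifting problems (\ref{eq 13}), avoiding any Eilenberg--Mac Lane identification. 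Finally, a smaller but real point: the assertion that $\mathbf{hom}(EG_{i}\times_{G_{i}}G_{i},-)$ preserves fibration sequences of presheaves of Kan complexes needs justification, because sectionwise fibrations are not injective fibrations of diagrams; one has to appeal to Lemma \ref{lem 7} after a fibrant replacement of diagrams, or to the adjunction identifying $\mathbf{hom}(EG_{i}\times_{G_{i}}G_{i},X)$ with $G_{i}$-equivariant maps out of the free object $EG_{i}$, and the same care is needed before applying $\Tot$ to levelwise weak equivalences, which is only safe between Bousfield--Kan fibrant cosimplicial spaces.
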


I say that a presheaf of Kan complexes $X$ has {\it only finitely many non-trivial presheaves of homotopy groups} if the canonical map 
\begin{equation*}
  p: X \to \mathbf{P}_{n}X
\end{equation*}
is a sectionwise weak equivalence
for some $n$, where $\mathbf{P}_{n}X$ is a Postnikov section of $X$.
We can also say, more compactly, that $X$ is a {\it sectionwise $n$-type}.

Theorem \ref{th 1} appears as Theorem \ref{th 9} below. It has the following special case:

\begin{corollary}\label{cor 2}
Suppose that $f: X \to Y$ is a local weak equivalence between
presheaves of Kan complexes on the finite \'etale site of a field $k$
 such that $X$
and $Y$ are sectionwise $n$-types. Then the induced map
\begin{equation*}
f_{\ast}: \varinjlim_{L/k}\ \mathbf{hom}(EG \times_{G} \Sp(L),X) \to 
\varinjlim_{L/k}\ \mathbf{hom}(EG \times_{G} \Sp(L),Y)
\end{equation*}
is a weak equivalence.
\end{corollary}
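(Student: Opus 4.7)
The plan is to deduce Corollary \ref{cor 2} directly from Theorem \ref{th 1} by transporting both sides along the equivalence between the finite \'etale site of $k$ and the site $G-\mathbf{Set}_{df}$, where $G = \operatorname{Gal}(k^{s}/k)$ is the absolute Galois group of $k$. First I would realize $G$ as a profinite group in the sense of the paper by taking the small cofiltered indexing category $I$ to be the poset of finite Galois subextensions $L/k$ inside a fixed separable closure $k^{s}$, with $G_{L} = \operatorname{Gal}(L/k)$ and the evident surjective restriction maps $G_{L'} \twoheadrightarrow G_{L}$ for $L \subseteq L'$. Every open normal subgroup of $G$ corresponds to such a finite Galois extension, so this poset is cofinal in any other indexing of $G$ as a profinite group and the two filtered colimits compute the same object.

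Next, I would match the objects appearing in the colimit. Under the site equivalence, the finite \'etale $k$-scheme $\Sp(L)$ for $L/k$ Galois corresponds to the finite discrete $G$-set $\operatorname{Hom}_{k}(L, k^{s})$, a torsor under $G_{L}$ on which $G$ acts through the quotient $G \twoheadrightarrow G_{L}$. Once a base embedding $L \hookrightarrow k^{s}$ is fixed, this is canonically the underlying $G$-set of $G_{L}$ with its left regular action, so the Borel construction $EG \times_{G} \Sp(L)$ on the finite \'etale site matches the simplicial presheaf $EG_{L} \times_{G_{L}} G_{L}$ appearing in Theorem \ref{th 1}. The transition maps induced by inclusions $L \subseteq L'$ then correspond exactly to those produced by the surjections $G_{L'} \twoheadrightarrow G_{L}$ in the profinite setup, so the two indexed systems are canonically isomorphic.

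The sectionwise $n$-type hypothesis on $X$ and $Y$ is precisely the hypothesis of Theorem \ref{th 1}, since saying that $X \to \mathbf{P}_{n}X$ is a sectionwise weak equivalence is the same as saying that $X$ has only finitely many non-trivial presheaves of homotopy groups. Moreover, local weak equivalences transport correctly across the site equivalence because the stalks on the finite \'etale site of $k$ are computed by the geometric-point functor at $k^{s}$, which is the same as the unique stalk functor on $G-\mathbf{Set}_{df}$. With these identifications in place, Theorem \ref{th 1} applies verbatim to the map $f_{\ast}$ and produces the asserted weak equivalence.

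I do not expect a genuine obstacle here: the entire mathematical content sits in Theorem \ref{th 1}, and the rest is bookkeeping to confirm that the equivalence of sites carries representable sheaves, the Borel construction, and the filtered colimit to their counterparts on $G-\mathbf{Set}_{df}$. The only point worth checking with some care is the cofinality of the poset of finite Galois extensions inside the indexing category $I$ used to present $G$, but this is immediate from the Galois correspondence between open normal subgroups of $G$ and finite Galois subextensions of $k^{s}/k$.
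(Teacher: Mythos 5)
Your proposal is correct and is essentially the paper's own argument: the paper obtains Corollary \ref{cor 2} as the specialization of Theorem \ref{th 1} (proved as Theorem \ref{th 9}) along the isomorphism of sites $fet\vert_{k} \cong G_{k}-\mathbf{Set}_{df}$, under which $\Sp(L)$ corresponds to the regular $G_{L}$-set and $EG \times_{G} \Sp(L)$ to $EG_{L} \times_{G_{L}} G_{L}$, with the colimits indexed by the same filtered system of finite Galois extensions (compare Theorem \ref{th 18}). Your additional bookkeeping on the torsor identification and cofinality simply makes explicit what the paper leaves implicit.
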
  

Theorem \ref{th 1} is proved by inductively solving obstructions for cosimplicial spaces after refining along the filtered diagram associated to the profinite group $G$, by using methods from Section 2. The assumption that the simplicial presheaf $X$ has only finitely many non-trivial presheaves of homotopy groups means that the obstructions can be solved in finitely many steps.

It is important to note that if $X$ is a sectionwise $n$-type and if $j: X \to Z$ is an injective fibrant model, then $Z$ is a sectionwise $n$-type. This observation is general \cite{GECT}, and is used in the proof of Corollary \ref{cor 10} below. 

When specialized to the fields case, Theorem \ref{th 1} implies that the colimit
\begin{equation*}
  \varinjlim_{L/k}\ \mathbf{hom}(EG \times_{G} \Sp(L),X)
\end{equation*}
is weakly equivalent to the simplicial set $Z(k)$ of global sections of a fibrant model $j: X \to Z$ on the finite \'etale site of a field $k$, provided that $X$ is a sectionwise $n$-type.

In particular, if $X$ is a sectionwise $n$-type,  and if $X$ also satisfies finite descent, then the map $X(k) \to Z(k)$ in global sections is a weak equivalence.
\medskip

Generally, Theorem \ref{th 1} means that one can use Galois cohomological methods to construct injective fibrant models for simplicial presheaves $X$ having finitely many non-trivial presheaves of homotopy groups. This construction specializes to (and incorporates) the identification (\ref{eq 6}) of Galois cohomology with \v{C}ech cohomology.

Going further involves use of the homotopy theory of pro-objects and their local pro-equivalences, which is enabled by \cite{pro}.

In general, a simplicial presheaf $Y$ is pro-equivalent to its derived Postnikov tower, via the canonical map $Y \to \mathbf{P}_{\ast}Y$. The Postnikov tower $\mathbf{P}_{\ast}Y$ has a (naive) fibrant model $\mathbf{P}_{\ast}(Y) \to L\mathbf{P}_{\ast}(Y)$ in the model category of towers of simplicial presheaves. One then has a string of local pro-equivalences
\begin{equation*}
  Y \to \mathbf{P}_{\ast}Y \to L\mathbf{P}_{\ast}Y,
\end{equation*}
and it follows from Corollary \ref{cor 2} that the induced composite in global sections is pro-equivalent to the pro-map
\begin{equation*}
  \theta: Y(k) \to \varinjlim_{L/k}\ \mathbf{hom}(EG \times_{G} \Sp(L),\mathbf{P}_{\ast}Y).
  \end{equation*}
There are two questions:
\begin{itemize}
\item[1)] Is the displayed map $\theta$ a pro-equivalence?
\item[2)] If $j: Y \to Z$ is an injective fibrant model for $Y$, is the corresponding map
  \begin{equation*}
  \theta: Z(k) \to \varinjlim_{L/k}\ \mathbf{hom}(EG \times_{G} \Sp(L),\mathbf{P}_{\ast}Z).
  \end{equation*}
  a pro-equivalence?
\end{itemize}

If the answer to both questions is yes, then the map $Y(k) \to Z(k)$ is a pro-equivalence of spaces, and hence a weak equivalence by Corollary \ref{cor 13} of this paper. These questions are the Galois descent criteria for a simplicial presheaf $Y$ and its fibrant model $Z$ on the \'etale site of a field.

Question 2) is non-trivial, perhaps surprisingly, but one should observe that the Postnikov tower construction $\mathbf{P}_{\ast}Z$ does not preserve injective fibrant objects.

The imposition of a global bound on cohomological dimension forces a positive answer to question 2), by Lemma \ref{lem 22} of this paper, and in that case
the simplicial presheaf $Y$ satisfies Galois descent if and only if the map $\theta$ is a pro-equivalence. 

Such bounds on global cohomological dimension are commonly met in geometric applications, including the Galois descent problem for algebraic $K$-theory with torsion coefficients.
\vfill\eject

\tableofcontents

\section{Profinite groups}

We begin with a discussion of some generalities about profinite
groups, in order to establish notation.

Suppose that the group-valued functor $G: I \to \mathbf{Grp}$ is a
profinite group. This means that $I$ is left filtered (any two objects
$i,i'$ have a common lower bound, and any two morphisms $i
\rightrightarrows j$ have a weak equalizer), and that all of the
constituent groups $G_{i}$, $i \in I$, are finite. We shall also assume
that all of the transition homomorphisms $G_{i} \to G_{j}$ in the
diagram are surjective.

\begin{example}
  The standard example is the absolute Galois group $G_{k}$ of a field
$k$. One takes all finite Galois extensions $L/k$ inside an
algebraically closed field $\Omega$ containing $k$ in the sense that
one has a fixed imbedding $i: k \to \Omega$, and the Galois extensions
are specific field extensions $L=k(\alpha)$ of $k$ inside $\Omega$.

These are the objects of a right filtered category, for which the
morphisms $L \to L'$ are extensions inside $\Omega$. The contravariant
functor $G_{k}$ that associates the Galois group $G_{k}(L) = G(L/k)$ to each of
these extensions is the absolute Galois group.

It is a basic assertion of field theory that if $L \subset L'$ inside $\Omega$ which are finite Galois extensions, then every field automorphism $\alpha: L' \to L'$ that fixes $k$ also permutes the roots which define $L$ over $k$, and hence restricts to an automorphism $\alpha\vert_{L}: L \to L$. The assignment $\alpha \mapsto \alpha\vert_{L}$ determines a surjective group homomorphism $G(L'/k) \to G(L/k)$.
\end{example}

Let $G-\mathbf{Set}_{df}$ be the category of finite discrete $G$-sets,
as in \cite{GECT}.
A discrete $G$-set is a set $F$ equipped
with an action 
\begin{equation*}
G \times F \to G_{i} \times F \to F,
\end{equation*}
where we write $G = \varprojlim_{i}\ G_{i}$, and a
morphism of discrete $G$-sets is a $G$-equivariant map.

Every finite discrete $G$-set $X$ has the form
\begin{equation*}
  X = G/H_{1} \sqcup \dots \sqcup G/H_{n},
\end{equation*}
where the groups $H_{i}$ are stabilizers of elements of $X$. In this way, the category $G-\mathbf{Set}_{df}$ is a thickening of the orbit category $\mathcal{O}_{G}$ for the profinite group $G$, whose objects are the finite quotients $G/H$ with $G$-equivariant maps between them. The subgroups $H_{i}$ are special: they are preimages of subgroups of the $G_{i}$ under the maps $G \to G_{i}$.

\begin{example}
  The finite \'etale site $fet\vert_{k}$ of $k$ is a category of schemes which has as objects all finite disjoint unions
  \begin{equation*}
    \Sp(L_{1}) \sqcup \dots \sqcup \Sp(L_{n})
  \end{equation*}
  of schemes defined by finite separable extensions $L_{i}/k$. The morphisms of
  $fet\vert_{k}$ are the scheme homomorphisms
  \begin{equation*}
    \Sp(L_{1}) \sqcup \dots \sqcup \Sp(L_{n}) \to \Sp(N_{1}) \sqcup \dots \sqcup \Sp(N_{m})
  \end{equation*}
over $k$,  or equivalently $k$-algebra homomorphisms
  \begin{equation*}
  \prod_{j}\ N_{j} \to \prod_{i}\ L_{i}.
\end{equation*}

  A finite separable extension $N=k(\alpha)$ of $k$ is specified by the root $\alpha$  in $\Omega$ of some separable polynomial $f(x)$.  A $k$-algebra map $N \to \Omega$ is specified by a root of $f(x)$ in $\Omega$, albeit not uniquely.

  One finds a finite Galois extension $L$ of $N$ by adjoining all roots of $f(x)$ to $N$. Then $L/N$ is Galois with Galois group $H=G(L/N)$, which is a subgroup of $G=G(L/k)$. The set of distinct maps $N \to \Omega$ can be identified with the set $G/H$, and $L$ is the fixed field of $H$.

It follows that there is a one-to-one correspondence
  \begin{equation*}
    \{ \text{finite separable $N/k$} \} \leftrightarrow \{ \text{$G_{k}$-sets $G/H$, $G=G(L/k)$ finite, $H \leq G$} \}
  \end{equation*}
  This correspondence determines an isomorphism of categories
  \begin{equation*}
    fet\vert_{k} \cong G_{k}-\mathbf{Set}_{df}.
  \end{equation*}

  If $k \subset L \subset N$ are finite separable extensions, then the function
  \begin{equation*}
    \hom_{k}(N,\Omega) \to \hom_{k}(L,\Omega)
  \end{equation*}
  is surjective, while the scheme homomorphism $\Sp(N) \to \Sp(L)$ is an \'etale cover. 
  \end{example}

For a general profinite group $G$, the category $G-\mathbf{Set}_{df}$ has a Grothendieck topology for which the
covering families are the $G$-equivariant surjections $U \to V$.

A presheaf $F$ is a sheaf for this topology if and only if $F(\emptyset)$ is a point, and every surjection $\phi = (\phi_{i}): \sqcup\ U_{i} \to V$ (covering family) induces an equalizer
\begin{equation}\label{eq 10}
  F(V) \to \prod_{i} \ F(U_{i}) \rightrightarrows \prod_{i \ne j}\ F(U_{i} \times_{V} U_{j}).
\end{equation}

The resulting sheaf category 
\begin{equation*}
\mathcal{B}G := \Shv(G-\mathbf{Set}_{df})
\end{equation*}
is often called the {\it classifying topos} for the
profinite group $G$.

\begin{lemma}\label{lem 5}
A
presheaf $F$ on $G-\mathbf{Set}_{df}$ is a sheaf if
and only if
\begin{itemize}
\item[1)]
$F$ takes disjoint unions to products, and
\item[2)]
each canonical map $G_{i} \to G_{i}/H$ induces a bijection
\begin{equation*}
F(G_{i}/H) \xrightarrow{\cong} F(G_{i})^{H}.
\end{equation*}
\end{itemize}
\end{lemma}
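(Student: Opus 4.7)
\emph{Plan.} I prove the two directions separately. Necessity is obtained by applying the sheaf axiom to two specific classes of cover matching (1) and (2). Sufficiency follows by reducing the sheaf equalizer for an arbitrary cover, via (1) and refinement along the filtered diagram $I$, to the case of a cover of a single orbit $G_i/K$ by orbits $G_i/H_l$ with $H_l \subseteq K$, where it unfolds into a direct equalizer computation in $F(G_i)$.

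\emph{Necessity.} Condition (1) follows from the sheaf axiom applied to the cover $\{U_i \hookrightarrow \sqcup_j U_j\}$, whose off-diagonal fibered products vanish (and $F(\emptyset)$ is a point by the sheaf definition). For (2), apply the axiom to $G_i \to G_i/H$. Since $G_i$ is free as a right $G_i$-set,
\begin{equation*}
G_i \times_{G_i/H} G_i \;\cong\; \bigsqcup_{h \in H} G_i
\end{equation*}
via $(x, y) \mapsto (x, x^{-1}y)$, and the two projections reduce on the $h$-th summand to the identity and to right multiplication $R_h \colon G_i \to G_i$ (both $G$-equivariant, since $G$ acts on $G_i$ by left multiplication). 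The equalizer of $p_1^{\ast}$ and $p_2^{\ast}$ is therefore $F(G_i)^H$, with $H$ acting through the operators $R_h^{\ast}$.

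\emph{Sufficiency.} Given a cover $\phi \colon \sqcup_l U_l \to V$, decompose $V$ and each $U_l$ into $G$-orbits; by (1), the sheaf equalizer splits as a product over the orbits of $V$, reducing to the case $V = G/K$ and each $U_l = G/H_l$. Since $I$ is left filtered and any finite discrete $G$-set has its action factor through some $G_i$, I pass to a common index $i$ and treat $K$ and the $H_l$ as subgroups of $G_i$. Each $\phi_l$ is surjective (its image is a nonempty $G_i$-stable subset of the transitive set $G_i/K$), so after replacing $H_l$ by an appropriate conjugate I may assume $H_l \subseteq K$ and $\phi_l$ is the canonical projection. The standard double-coset decomposition
\begin{equation*}
G_i/H_l \times_{G_i/K} G_i/H_{l'} \;\cong\; \bigsqcup_{k \in H_l \backslash K / H_{l'}} G_i/(H_l \cap k H_{l'} k^{-1})
\end{equation*}
then, together with (1) and (2), identifies $F$ of the fibered product with a product of fixed-point sets inside $F(G_i)$. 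A family $(x_l) \in \prod_l F(G_i)^{H_l}$ lies in the equalizer iff $x_l = R_k^{\ast}(x_{l'})$ in $F(G_i)$ for all $l, l'$ and all double-coset representatives $k$; specializing $l = l'$ and letting $k$ range over $K$ forces $x_l \in F(G_i)^K$, after which specializing $l \neq l'$ and $k = e$ forces the $x_l$ to coincide. The equalizer is thus $F(G_i)^K = F(V)$ by (2).

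The only genuine obstacle is the bookkeeping in the sufficiency step: decomposing the fibered products via double cosets and identifying the two projection maps at the level of $F(G_i)$ as the identity and a right multiplication. Once this is set up, the equalizer computation itself is elementary and parallels the calculation used for necessity.
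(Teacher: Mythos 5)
Your proof is correct and follows essentially the same route as the paper's: necessity from the covers $U_{l} \hookrightarrow \sqcup_{l} U_{l}$ and $G_{i} \to G_{i}/H$ together with the identification $G_{i} \times_{G_{i}/H} G_{i} \cong \sqcup_{h \in H} G_{i}$, and sufficiency by using additivity to reduce to orbits, conjugating the stabilizers into the base subgroup, and identifying the \v{C}ech equalizer with invariants inside $F(G_{i})$. The only difference is one of detail: the paper's converse treats just a single covering map $G_{i}/K \to G_{i}/H$ and asserts the invariants identification without computation, whereas you carry out the double-coset decomposition of the fibered products and the resulting equalizer calculation explicitly for an arbitrary cover.
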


\noindent
The assertion that a presheaf $F$ takes disjoint unions to products is often called the {\it additivity} condition for $F$.

\begin{proof}
If $F$ is a sheaf, then the covering given by the inclusions $U \to U \sqcup V$ and $V \to U \sqcup V$ defines an isomorphism
\begin{equation*}
  F(U \sqcup V) \xrightarrow{\cong} F(U) \times F(V),
\end{equation*}
since $U \times_{U \sqcup V} V = \emptyset$. Also, if $H \subset G_{i}$ is a subgroup then $G_{i} \times_{G_{i}/H} G_{i} \cong \sqcup_{H}\ G_{i}$, and the coequalizer
\begin{equation*}
  F(G_{i}/H) \to F(G_{i}) \rightrightarrows \prod_{H}\ F(G_{i})
\end{equation*}
identifies $F(G_{i}/H)$ with the set of $H$-invariants $F(G_{i})^{H}$.

Conversely, if the presheaf $F$ satisfies conditions 1) and 2) and $G_{i}/K \to G_{i}/H$ is an equivariant map, then $K$ is conjugate to a subgroup of $H$, so we can assume that $K \subset H$ up to isomorphism. Then $F(G_{i})^{H}$ is isomorphic to the set of $H$-invariants of $F(G_{i})^{K}$, so that the equivariant covering $G_{i}/K \to G_{i}/H$ defines an equalizer of the form (\ref{eq 10}).
\end{proof}

It follows from Lemma \ref{lem 5} that every
discrete $G$-set $F$ represents a sheaf
\begin{equation*}
  F:= \hom(\ ,F)
\end{equation*}
  on $G-\mathbf{Set}_{df}$.
  \medskip
  
Let
\begin{equation*}
\pi: G-\mathbf{Set}_{df} \to \mathbf{Set}
\end{equation*}
be the functor which takes a finite discrete $G$-set to its underlying
set. Every set $X$ represents a sheaf $\pi_{\ast}X$ on
$G-\mathbf{Set}_{df}$ with
\begin{equation*}
\pi_{\ast}X(U) = \hom(\pi(U),X).
\end{equation*}
The left adjoint $\pi^{\ast}$ of the
corresponding functor $\pi_{\ast}$
has the form
\begin{equation*}
\pi^{\ast}F = \varinjlim_{i}\ F(G_{i}),
\end{equation*}
by a cofinality argument.
\medskip


A map $f: F \to G$ of presheaves is a {\it local epimorphism} if, given $y \in G(U)$ there is a covering $\phi: V \to U$ such that $\phi^{\ast}(y)$ is in the image of $f: F(V) \to G(V)$.

The presheaf map $f: F \to G$ is a {\it local monomorphism} if, given $x,y \in F(U)$ such that $f(x)=f(y)$ there is a covering $\phi: V \to U$ such that $\phi^{\ast}(x) = \phi^{\ast}(y)$ in $F(V)$.

It is a general fact that a morphism $f: F \to G$ of sheaves is an isomorphism if and only if it is both a local monomorphism and a local epimorphism.

Finally, one can show that a map $f: F \to G$ of sheaves on
$G-\mathbf{Set}_{df}$ is a local epimorphism (respectively local
monomorphism) if and only if the induced function
\begin{equation*}
  \pi^{\ast}(f): \pi^{\ast}(F) \to \pi^{\ast}(G)
\end{equation*}
is surjective (respectively injective). It follows that $f$ is an
isomorphism if and only if the function $\pi^{\ast}(f)$ is bijective.

We have a functor $\pi^{\ast}$ which is both exact (i.e. preserves
finite limits) and is faithful. This means that the corresponding
geometric morphism
\begin{equation*}
  \pi = (\pi^{\ast},\pi_{\ast}): \Shv(G-\mathbf{Set}_{df}) \to \mathbf{Set}
\end{equation*}
is a stalk (or Boolean localization) for the category of sheaves and presheaves on $G-\mathbf{Set}_{df}$, and gives a complete description of the local behaviour of sheaves and presheaves on this site.
\medskip

We use these observations to start up a homotopy theoretic machine \cite{LocHom}.
A map $f: X \to Y$ of simplicial presheaves (or simplicial sheaves) on
$G-\mathbf{Set}_{df}$ is a {\it local weak equivalence} if and only if the
induced map $\pi^{\ast}X \to \pi^{\ast}Y$ is a weak equivalence of
simplicial sets. 

The local weak equivalences are the weak equivalences of the 
{\it injective model structure} on the simplicial presheaf category for
the site $G-\mathbf{Set}_{df}$. The {\it cofibrations} are the
monomorphisms of simplicial presheaves (or simplicial sheaves). The fibrations for this
structure, also called the {\it injective fibrations}, are the maps
which have the right lifting property with respect to all cofibrations
which are local weak equivalences.

There are two model structures here, for the category $s\Pre(G-\mathbf{Set}_{df})$ of simplicial presheaves and for the category $s\Shv(G-\mathbf{Set}_{df})$ of simplicial sheaves, respectively. The forgetful and associated sheaf functors determine an adjoint pair of functors
\begin{equation*}
  L^{2}: s\Pre(G-\mathbf{Set}_{df}) \leftrightarrows s\Shv(G-\mathbf{Set}_{df}): u,
\end{equation*}
which is a Quillen equivalence, essentially since the canonical associated sheaf map $\eta: X \to L^{2}X = uL^{2}(X)$ is a local isomorphism, and hence a
local weak equivalence.

The associated sheaf functor $L^{2}$ usually has a rather formal construction, but in this case there is a nice description:
\begin{equation*}
  L^{2}F(G_{i}/H) = \varinjlim_{G_{j} \to G_{i}}\ F(G_{j})^{p^{-1}(H)},
\end{equation*}
where $p: G_{j} \to G_{i}$ varies over the transition maps of $G$ which take values in $G_{i}$.
\medskip

Here's a trick: suppose that $f: E \to F$ is a function, and form the groupoid
$E/f$ whose objects are the elements of $E$, and such that there is a unique morphism $x \to y$ if $f(x) = f(y)$. The corresponding nerve $B(E/f)$ has contractible path components, since each path component is the nerve of a trivial groupoid, and there is an isomorphism $\pi_{0}B(E/f) \cong f(E)$. It follows that there are simplicial set maps
\begin{equation*}
  B(E/f) \xrightarrow{\simeq} f(E) \subset F,
\end{equation*}
where the sets $f(E)$ and $F$ are identified with discrete simplicial
sets.  In particular, if $f$ is surjective then the map $B(E/f) \to F$
is a weak equivalence.

This construction is functorial, and hence applies to presheaves and sheaves. In particular, suppose that $\phi: V \to U$ is a local epimorphism of presheaves. Then the simplicial presheaf map
\begin{equation*}
  \check{C}(V):= B(E/\phi) \to U
\end{equation*}
is a local weak equivalence of simplicial presheaves, because $B(E/\phi) \to \phi(V)$ is a sectionwise hence local weak equivalence and $\phi(V) \to U$ induces an isomorphism of associated sheaves.

As the notation suggests, $\check{C}(V)$ is the \v{C}ech resolution for the covering $\phi$. All \v{C}ech resolutions arise from this construction.
\medskip

\noindent
    {\bf Examples}:\ 1)\ Suppose that $G= \{ G_{i} \}$ is a profinite group.
    The one-point set $\ast$ is a terminal object of the category
$G-\mathbf{Set}_{df}$. The group $G_{i}$ defines a covering $G_{i}=\hom(\ ,G_{i}) \to
    \ast$ of the terminal object, while the group $G_{i}$ acts on the sheaf $G_{i}=\hom(\ ,G_{i})$ by composition. There is a simplicial presheaf map
    \begin{equation*}
      \eta: EG_{i} \times_{G_{i}} G_{i} \to \check{C}(G_{i})\
    \end{equation*}
    which takes a morphism $\phi \to g\cdot\phi$ to the pair $(\phi,g\cdot\phi)$. The map $\eta$ induces an isomorphism in sections corresponding to quotients $G_{j}/H$ for $j \geq i$, hence in stalks, and is therefore the associated sheaf map and a local weak equivalence. 
    \medskip

    \noindent
    2)\ Suppose that $L/k$ is a finite Galois extension with Galois group $G$, and let $\Sp(L) \to \ast$ be the corresponding sheaf epimorphism on the finite \'etale site for $k$. The Galois group $G$ acts on $\Sp(L)$, and there is a canonical map
    \begin{equation*}
      \eta: EG \times_{G} \Sp(L) \to \check{C}(L).
      \end{equation*}

    For a finite separable extension $N/k$, the sections $\Sp(L)(N)$ are the $k$-algebra maps $L \to N$. Any two such maps determine a commutative diagram
    \begin{equation*}
      \xymatrix@R=8pt{
        L \ar[dr] \ar[dd]_{\sigma} \\
        & N \\
        L \ar[ur]
      }
    \end{equation*}
  where $\sigma$ is a uniquely determined element of the Galois group $G$. It follows that $\eta$ is an isomorphism in sections corresponding to all such extensions $N$, and so $\eta$ is the associated sheaf map for the simplicial presheaf $EG \times_{G} \Sp(L)$, and is a local weak equivalence of simplicial presheaves for the \'etale topology.
  \medskip

  \begin{remark}\label{rem 6}
  Every category of simplicial presheaves has an auxiliary model structure which is defined by cofibrations as above and sectionwise weak equivalences.  A map $X \to Y$ is a sectionwise weak equivalence if all induced maps $X(U) \to Y(U)$ in sections are weak equivalences of simplicial sets for all objects $U$ in the underlying site. This model structure is a special case of the injective model structure for simplicial presheaves on a site, for the so-called chaotic topology \cite[Ex. 5.10]{LocHom}.

  The fibrations for this model structure will be called {\it injective fibrations of diagrams} in what follows. These are the maps which have the right lifting property with respect to all cofibrations $A \to B$ which are sectionwise weak equivalences.

  Every injective fibration of simplicial presheaves is an injective fibration of diagrams, since every sectionwise weak equivalence is a local weak equivalence. The converse is not true.
  \end{remark}

  In all that follows, an {\it injective fibrant model} of a simplicial presheaf $X$ is a local weak equivalence $j: X \to Z$ such that $Z$ is injective fibrant.

  Every simplicial presheaf $X$ has an injective fibrant model: factorize the canonical map $X \to \ast$ to the terminal object as a trivial cofibration $j: X \to Z$, followed by an injective fibration $Z \to \ast$.

  Here is an example: if $F$ is a presheaf, identified with a simplicial presheaf which is discrete in the simplicial direction, then the associated sheaf map $\eta: F \to \tilde{F}$ is an injective fibrant model.

  Any two injective fibrant models of a fixed simplicial presheaf $X$ are equivalent in a very strong sense --- they are homotopy equivalent.

  In effect, every local weak equivalence $Z_{1} \to Z_{2}$ of injective fibrant objects is a homotopy equivalence, for the cylinder object that is defined by the standard $1$-simplex $\Delta^{1}$. It follows that all simplicial set maps $Z_{1}(U) \to Z_{2}(U)$ are homotopy equivalences.

  In particular, every local weak equivalence of injective fibrant objects is a sectionwise equivalence. 
  \medskip
  
  The injective model structure on the simplicial presheaf category $s\Pre(G-\mathbf{Set}_{df})$ is a simplicial model structure, where the function complex $\mathbf{hom}(X,Y)$ has $n$-simplices given by the maps $X \times \Delta^{n} \to Y$.

All simplicial presheaves are cofibrant. It follows that, if $Z$ is an injective fibrant simplicial presheaf and the map $\theta: A \to B$ is a local weak equivalence, then the induced map
\begin{equation*}
  \theta^{\ast}: \mathbf{hom}(B,Z) \to \mathbf{hom}(A,Z)
\end{equation*}
is a weak equivalence of simplicial sets.

\section{Cosimplicial spaces}

We shall use the Bousfield-Kan model structure for cosimplicial spaces
\cite{BK}, \cite{cosimp}.  The weak equivalences for this structure
are defined sectionwise: a map $f: X \to Y$ is a weak equivalence
of cosimplicial spaces if and only if all maps $X^{n} \to Y^{n}$ are
weak equivalences of simplicial sets.  The fibrations for the structure
are those maps $p: X \to Y$ for which all induced maps
\begin{equation*}
(p,s): X^{n+1} \to Y^{n+1} \times_{M^{n}Y} M^{n}X
\end{equation*}
are fibrations of simplicial sets.
Recall that $M^{n}X$ is the subcomplex of $\prod_{j=0}^{n} X^{n}$
which consists of those elements $(x_{0}, \dots ,x_{n})$ such that
$s^{j}x_{i} = s^{i}x_{j+1}$ for $i \leq j$,
and the canonical map $s: X^{n+1} \to M^{n}X$ is defined by
\begin{equation*}
s(x) = (s^{0}x,s^{1}x, \dots, s^{n}x).
\end{equation*}

The total complex $\Tot(X)$ for a fibrant cosimplicial space $X$ is
defined by
\begin{equation*}
\Tot(X) = \mathbf{hom}(\Delta,X),
\end{equation*}
where $\mathbf{hom}(\Delta,X)$ is the standard presheaf-theoretic
function complex, and $\Delta$ is the cosimplicial space of
standard simplices, given by the assignments $\mathbf{n} \mapsto \Delta^{n}$. The $p$-simplices of $\mathbf{hom}(\Delta,X)$ are the cosimplicial space maps $\Delta \times \Delta^{p} \to X$.
\medskip

If $X$ and $U$ are simplicial presheaves, write
$\hom(U_{\bullet},X)$ for the cosimplicial space $\mathbf{n}
\mapsto \hom(U_{n},X)$. If $U$ is representable by a
simplicial object $U$ in the underlying site, then
$\hom(U_{\bullet},X)$ can be identified up to isomorphism with
the cosimplicial space $\mathbf{n} \mapsto X(U_{n})$.

There are adjunction isomorphisms
\begin{equation*}
\begin{aligned}
\hom(\Delta \times \Delta^{p},\hom(U_{\bullet},X))
&\cong \hom(U,\mathbf{hom}(\Delta^{p},X))\\
&\cong \hom(U \times \Delta^{p},X),
\end{aligned}
\end{equation*}
which relate cosimplicial space maps to simplicial set maps.
Letting $p$ vary gives a natural isomorphism
\begin{equation*}
\Tot(\hom(U_{\bullet},X)) = \mathbf{hom}(\Delta,\hom(U_{\bullet},X)) 
\cong \mathbf{hom}(U,X)
\end{equation*}
of simplicial sets, for all simplicial presheaves $U$ and $X$.

\begin{lemma}\label{lem 7}
Suppose that $U$ is a simplicial presheaf. Then the functor $X \mapsto
\hom(U_{\bullet},X)$ takes injective fibrations of diagrams to
Bousfield-Kan fibrations of cosimplicial spaces.
\end{lemma}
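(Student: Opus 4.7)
The plan is to identify the matching objects on the cosimplicial side with latching objects on the simplicial-presheaf side, and then reduce the fibration condition to the defining lifting property of an injective fibration of diagrams via a pushout-product argument.

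First I would set up the identification of matching objects. Since $X \mapsto \hom(U_{\bullet},X)$ is contravariant in $U$, the matching object $M^{n}\hom(U_{\bullet},X)$, which records compatible families under the codegeneracies $s^{i}\colon \hom(U_{n+1},X)\to \hom(U_{n},X)$ induced by the simplicial degeneracies $s_{i}\colon U_{n}\to U_{n+1}$, is canonically isomorphic to $\hom(L_{n+1}U,X)$, where $L_{n+1}U\subseteq U_{n+1}$ is the latching object (the subobject generated by the images of the degeneracies). Under this identification, the matching map
\begin{equation*}
(p,s)\colon \hom(U_{n+1},X)\to \hom(U_{n+1},Y)\times_{\hom(L_{n+1}U,Y)} \hom(L_{n+1}U,X)
\end{equation*}
is the one induced by $p\colon X\to Y$ and the monomorphism $i\colon L_{n+1}U\hookrightarrow U_{n+1}$ of simplicial presheaves.

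Next I would translate the statement ``$(p,s)$ is a Kan fibration'' by adjunction. The map $(p,s)$ has the right lifting property with respect to a map $j\colon A\to B$ of simplicial sets if and only if $p\colon X\to Y$ has the right lifting property with respect to the pushout-product
\begin{equation*}
i\,\Box\, j\colon (U_{n+1}\times A)\cup_{L_{n+1}U\times A}(L_{n+1}U\times B)\longrightarrow U_{n+1}\times B.
\end{equation*}
So it suffices, for every trivial cofibration $j$ of simplicial sets, to show that $i\,\Box\, j$ is a monomorphism of simplicial presheaves that is a sectionwise weak equivalence: such maps are precisely the trivial cofibrations of the auxiliary (chaotic, sectionwise) injective model structure of Remark \ref{rem 6}, against which an injective fibration of diagrams lifts by definition.

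The main (and essentially only) obstacle is verifying this sectionwise trivial cofibration property of $i\,\Box\, j$. Monicity is clear since pushout-products of monomorphisms are monomorphisms. For the weak equivalence, I would evaluate in a section $V$: the map $i\,\Box\, j$ at $V$ is the pushout-product in simplicial sets of the inclusion of discrete sets $L_{n+1}U(V)\hookrightarrow U_{n+1}(V)$ with the trivial cofibration $A\to B$, and hence is a trivial cofibration by the pushout-product (SM7) axiom for the standard simplicial model structure on $s\mathbf{Set}$. This gives sectionwise weak equivalence, and combining with the lifting characterization above completes the proof.
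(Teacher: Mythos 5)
Your proposal is correct and follows essentially the same route as the paper: the paper identifies $M^{n}\hom(U_{\bullet},X)$ with $\hom(DU_{n+1},X)$, where $DU_{n+1}\subset U_{n+1}$ is the degenerate part (your latching object $L_{n+1}U$), and then lifts against the maps $(U_{n+1}\times\Lambda^{m}_{k})\cup(DU_{n+1}\times\Delta^{m})\subset U_{n+1}\times\Delta^{m}$, which are exactly your pushout-products $i\,\Box\,j$ for the generating trivial cofibrations $j=(\Lambda^{m}_{k}\subset\Delta^{m})$, these being sectionwise trivial cofibrations against which an injective fibration of diagrams has the right lifting property. The only differences are cosmetic: you argue with arbitrary trivial cofibrations $j$ and verify the sectionwise equivalence via SM7 for simplicial sets, while the paper works directly with horns.
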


\begin{proof}
There is an isomorphism
\begin{equation*}
M^{n}\hom(U_{\bullet},X) \cong \hom(DU_{n+1},X^{n+1}),
\end{equation*}
where $DU_{n+1} \subset U_{n+1}$ is the degenerate part of $U_{n+1}$
in the presheaf category. Suppose that $p: X \to Y$ is an injective
fibration. Then $p$ has the right lifting property with respect to the
trivial cofibrations
\begin{equation*}
(U_{n+1} \times \Lambda^{m}_{k}) \cup (DU_{n+1} \times \Delta^{m}) 
\subset U_{n+1} \times \Delta^{m},
\end{equation*}
so that the map
\begin{equation*}
\hom(U_{n+1},X) \to 
\hom(U_{n+1},Y) \times_{\hom(DU_{n+1},Y)} \hom(DU_{n+1},X)
\end{equation*}
is a fibration.
\end{proof}

\begin{remark}
If $Y$ is a Bousfield-Kan fibrant cosimplicial space then
there is a weak equivalence
\begin{equation*}
\Tot Y = \mathbf{hom}(\Delta,Y) \simeq \hoinvlim_{n}\ Y^{n},
\end{equation*}
which is natural in $Y$.

This is most easily seen
by using the injective model structure for cosimplicial spaces (i.e. for cosimplicial diagrams) of Remark \ref{rem 6} (see also
\cite{cosimp}).

In effect, if $j: Y \to Z$ is an injective fibrant model for $Y$ in
cosimplicial spaces, then $j$ is a weak equivalence of Bousfield-Kan
fibrant objects, so the map $j_{\ast}: \Tot(Y) \to \Tot(Z)$ is a weak
equivalence. It follows that there is a natural string of weak equivalences
\begin{equation*}
  \Tot(Y) \xrightarrow{\simeq} \Tot(Z) = \mathbf{hom}(\Delta,Z) \xleftarrow{\simeq} \mathbf{hom}(\ast,Z) = \varprojlim\ Z =: \hoinvlim\ Y,
  \end{equation*}
since the cosimplicial space $\Delta$ is cofibrant for the Bousfield-Kan structure.

It follows from Lemma \ref{lem 7} that if $U$ and $Z$ are simplicial
presheaves such that $Z$ is injective fibrant, then there is a natural
weak equivalence
\begin{equation*}
\mathbf{hom}(U,Z) \simeq \hoinvlim_{n}\ \hom(U_{n},Z).
\end{equation*}

\noindent
{\bf Examples}:\ 1)\
Suppose that $L/k$ is a finite Galois extension with Galois group $G$, and let $Y$ be a presheaf of Kan complexes for the finite \'etale site over $k$.
The function complex
\begin{equation*}
  \mathbf{hom}(EG \times_{G} \Sp(L),Y)
\end{equation*}
  can be rewritten as a homotopy inverse limit
\begin{equation*}
  \hoinvlim_{n}\ Y(\sqcup_{G^{\times n}}\ \Sp(L)) = \hoinvlim_{n} (\prod_{G^{\times n}}\ Y(L)) = \hoinvlim_{G}\ Y(L) = Y(L)^{hG},
\end{equation*}
which is the homotopy fixed points space for the action of $G$ on the space $Y(L)$ of $L$-sections of $Y$.
\medskip

\noindent
2)\ Similarly, if $G = \{ G_{i} \}$ is a profinite group and $X$ is a presheaf of Kan complexes on $G-\mathbf{Set}_{df}$, then
\begin{equation*}
\mathbf{hom}(EG_{i} \times_{G_{i}} G_{i},X) \simeq \hoinvlim_{G_{i}} 
\ X(G_{i}) = X(G_{i})^{hG_{i}}
\end{equation*}
is the homotopy fixed points space for the action of the group $G_{i}$ on the space $X(G_{i})$.
\end{remark}

We prove the following:

\begin{theorem}\label{th 9}
Suppose that $f: X \to Y$ is a local weak equivalence between
presheaves of Kan complexes on the site $G-\mathbf{Set}_{df}$ such that $X$
and $Y$ are sectionwise $n$-types. Then the induced map
\begin{equation*}
f_{\ast}: \varinjlim_{i}\ \mathbf{hom}(EG_{i} \times_{G_{i}} G_{i},X) \to 
\varinjlim_{i}\ \mathbf{hom}(EG_{i} \times_{G_{i}} G_{i},Y)
\end{equation*}
is a weak equivalence.
\end{theorem}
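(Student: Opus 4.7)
The plan is to argue by induction on $n$, using the Postnikov tower together with the cosimplicial-space technology of Lemma~\ref{lem 7}. First, I would reduce to the case where the target is injective fibrant. Take an injective fibrant model $j: Y \to Z$; by the remark preceding the theorem, $Z$ is still a sectionwise $n$-type, while $j$ is a local weak equivalence of injective fibrant objects and hence a sectionwise equivalence. Lemma~\ref{lem 7} then implies that $j$ induces a weak equivalence on each $\mathbf{hom}(EG_i \times_{G_i} G_i, -)$, and therefore on the filtered colimit over $i$. Two-out-of-three applied to the composite $X \xrightarrow{f} Y \xrightarrow{j} Z$ reduces the theorem to the special case of a local weak equivalence $X \to Z$ whose target is injective fibrant.

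Next, I would induct on $n$. For both $X$ and $Z$, form the (sectionwise) Postnikov sections, giving compatible fibration sequences $K(\tilde{\pi}_n, n) \to P_n \to P_{n-1}$. After applying $\mathbf{hom}(EG_i \times_{G_i} G_i, -)$ and taking $\varinjlim_i$, these become fibration sequences of simplicial sets (filtered colimits of Kan complexes preserve fibration sequences). The inductive hypothesis handles the $P_{n-1}$ piece, so the problem reduces to establishing the theorem for the Eilenberg-MacLane piece $K(\tilde{\pi}_n X, n)$ and its counterpart for $Z$. The base case $n = 0$ (presheaves of sets) is a direct calculation using the explicit associated-sheaf formula $L^2 F(G_i/H) = \varinjlim_{G_j \to G_i} F(G_j)^{p^{-1}(H)}$ recorded in Section~1.

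The heart of the argument is the Eilenberg-MacLane step. For an abelian presheaf $A$, the cosimplicial space $\hom((EG_i \times_{G_i} G_i)_\bullet, K(A, n))$ is built from the standard cobar complex for group cohomology of $G_i$ with coefficients in $A(G_i)$, so its totalization has homotopy groups $H^{n-s}(G_i, A(G_i))$. Passing to $\varinjlim_i$, Thomason's additivity identification~(\ref{eq 7}) identifies this colimit with the Galois cohomology of the associated sheaf $\tilde{A}$, and since $X \to Z$ induces isomorphisms on sheaves of homotopy groups, the resulting groups for $X$ and $Z$ coincide. The main obstacle will be showing that the obstruction classes governing the comparison up the Postnikov tower vanish at a finite stage of the filtered diagram $\{G_i\}$: these obstructions live in presheaf cohomology at each fixed $i$, but after the filtered colimit become classes in sheaf (Galois) cohomology, where they vanish because $X$ and $Z$ share the same homotopy group sheaves. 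The $n$-type hypothesis is exactly what guarantees that only finitely many such obstructions arise, so the induction terminates and no convergence issues appear.
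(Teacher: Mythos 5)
Your reduction step is where the argument breaks. You take an injective fibrant model $j: Y \to Z$ and assert that $j$ is "a local weak equivalence of injective fibrant objects and hence a sectionwise equivalence" --- but $Y$ is only a presheaf of Kan complexes, not injective fibrant, so this is false in general; if it were true, every sectionwise $n$-type would satisfy descent and the theorem would be empty (it fails already for $K(A,n)$ with $\tilde{A}$ having nontrivial Galois cohomology). Lemma \ref{lem 7} is about fibrations, not equivalences, and the claim that $j$ induces a weak equivalence on each fixed $\mathbf{hom}(EG_i \times_{G_i} G_i, -)$ is precisely the finite descent (homotopy fixed point) problem, which is not true at a fixed $i$; even the colimit statement is Corollary \ref{cor 10}, which the paper deduces \emph{from} Theorem \ref{th 9}, so the proposed reduction is circular. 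The legitimate reduction, used in the paper, is to replace $f$ by an injective fibration of \emph{diagrams} between diagram-injective-fibrant objects (the sectionwise structure of Remark \ref{rem 6}), which preserves sectionwise homotopy types and keeps $f$ a local weak equivalence, hence locally trivial.

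The Eilenberg--MacLane step also leaves the real work unaddressed. Identifying $\hom((EG_i \times_{G_i} G_i)_\bullet, K(A,n))$ with the cobar complex computing $H^{\ast}(G_i, A(G_i))$, and invoking (\ref{eq 7}), requires the coefficient presheaf $A$ to be additive; presheaves of homotopy groups of an arbitrary presheaf of Kan complexes need not be additive (nor abelian when $n=1$, and the fibres of $\mathbf{P}_nX \to \mathbf{P}_{n-1}X$ are twisted over components, so the layer is not a single $K(A,n)$ without further hypotheses). Most seriously, the assertion that the obstruction classes "vanish at a finite stage of the filtered diagram" is exactly the crux, and you give no mechanism for it. The paper's mechanism is concrete: because $f$ is a locally trivial fibration of diagrams, each levelwise lifting problem over $(EG_i \times_{G_i} G_i)_n = \sqcup_{G_i^{\times n}} G_i$ can be solved after passing to a covering, and every such covering is refined by the map induced by a transition morphism $\gamma: G_j \to G_i$ of the pro-group; Lemma \ref{lem 11} together with the sectionwise $n$-type hypothesis kills all obstructions in the Tot-tower above a fixed degree $N$, so only finitely many refinements are needed and they are realized simultaneously in the filtered colimit. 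Without an argument of this shape (or a genuine substitute for it, e.g.\ a proof that the colimit of the \v{C}ech-type cochain cohomologies is invariant under local isomorphisms of not-necessarily-additive presheaves), your outline does not close.
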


\begin{corollary}\label{cor 10}
Suppose that $X$ is a presheaf of Kan complexes on $G-\mathbf{Set}_{df}$
that is a sectionwise $n$-type, and let $j:
X \to Z$ be an injective fibrant model. Then the induced map of simplicial sets
\begin{equation*}
\varinjlim_{i}\ \mathbf{hom}(EG_{i} \times_{G_{i}} G_{i},X) \underset{j_{\ast}}{\xrightarrow{\simeq}}
\varinjlim_{i}\ \mathbf{hom}(EG_{i} \times_{G_{i}} G_{i},Z)
\end{equation*}
is a weak equivalence.
  \end{corollary}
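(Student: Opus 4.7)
The approach is to reduce Corollary \ref{cor 10} directly to Theorem \ref{th 9} (= Theorem \ref{th 1}). The latter requires a local weak equivalence between presheaves of Kan complexes that are both sectionwise $n$-types. The map $j: X \to Z$ is a local weak equivalence by the very definition of an injective fibrant model, and $X$ is a presheaf of Kan complexes and a sectionwise $n$-type by hypothesis. Moreover, $Z$ is a presheaf of Kan complexes because it is injective fibrant (and in particular sectionwise Kan). So the single nontrivial point is to verify that $Z$ is a sectionwise $n$-type.

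For that, I would invoke the general fact cited in the paragraph immediately following the statement of Theorem \ref{th 1}: if $X$ is a sectionwise $n$-type and $j: X \to Z$ is an injective fibrant model, then $Z$ is again a sectionwise $n$-type (see \cite{GECT}). The heuristic behind this is that the canonical map $X \to \mathbf{P}_nX$ is a sectionwise weak equivalence, hence in particular a local weak equivalence, so the associated sheaves of homotopy groups $\tilde{\pi}_kX$ vanish for $k > n$; since $j$ is a local weak equivalence, $\tilde{\pi}_kZ = 0$ for $k > n$ as well, and for an injective fibrant object this vanishing of the homotopy group sheaves in high degrees propagates back to the level of sections, by comparing $Z$ with an injective fibrant model of its Postnikov section $\mathbf{P}_nZ$ via the fact that a local weak equivalence between injective fibrant objects is a sectionwise equivalence.

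Once $Z$ is known to be a sectionwise $n$-type, Theorem \ref{th 9} applies to the map $j: X \to Z$ and yields immediately that
\begin{equation*}
j_\ast: \varinjlim_i\ \mathbf{hom}(EG_i \times_{G_i} G_i, X) \to \varinjlim_i\ \mathbf{hom}(EG_i \times_{G_i} G_i, Z)
\end{equation*}
is a weak equivalence of simplicial sets, which is exactly the conclusion of Corollary \ref{cor 10}.

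The only real obstacle in this proof is the preservation of the sectionwise $n$-type property under injective fibrant replacement; everything else is a direct citation of Theorem \ref{th 9}. Since the paper explicitly flags this preservation as a general fact proved in \cite{GECT}, I would cite it rather than reprove it, keeping the argument short.
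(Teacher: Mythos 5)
Your proof is correct and follows essentially the same route as the paper: both reduce the statement to Theorem \ref{th 9} applied to the local weak equivalence $j: X \to Z$, with the only substantive point being that $Z$ is again a sectionwise $n$-type, which both you and the paper settle by citing the general preservation fact from \cite{GECT} (the paper additionally sketches it via a Postnikov tower argument and the cohomological identification $\pi_{j}(Z(U)) \cong H^{n-j}(U,\tilde{A}\vert_{U})$ for fibrant models of $K(A,n)$). Note only that your heuristic for the preservation fact is mildly circular at the last step (knowing that the fibrant model of $\mathbf{P}_{n}Z$ is a sectionwise $n$-type is exactly the fact in question, and the genuine input is the vanishing of negative-degree sheaf cohomology in the $K(A,n)$ computation), but this is immaterial since, like the paper, you rely on the citation rather than the sketch.
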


\begin{proof}
If all presheaves of homotopy groups $\pi_{i}X$ are trivial for $i
\geq N$, then the homotopy groups $\pi_{i}Z$ are trivial for $i \geq
N$.

This is a special case of a very general fact
\cite[Prop 6.11]{GECT}. The proof uses a Postnikov tower argument, together with
the following statements:
\begin{itemize}
  \item[1)]
If $Y$ is a presheaf that is identified with a discrete simplicial sheaf, then the associated sheaf map $j: Y \to \tilde{Y}$ is an injective fibrant model.
\item[2)]
  If $Y = K(A,n)$ for some presheaf of groups $A$ and $j: K(A,n) \to Z$ is a fibrant model, then there are isomorphisms
  \begin{equation*}
    \pi_{j}(Z(U)) \cong
    \begin{cases}
      H^{n-j}(U,\tilde{A}\vert_{U}) & \text{if $0 \leq j \leq n$, and} \\
      0 & \text{if $j>n$.}
    \end{cases}
    \end{equation*}
  \end{itemize}
  
It follows from 
Theorem \ref{th 9} that the map $j_{\ast}$ is a weak equivalence.
\end{proof}

\begin{proof}[Proof of Theorem \ref{th 9}]
We can suppose that $X$ and $Y$ are injective fibrant {\it as diagrams} on
$G-\mathbf{Set}_{df}$ and that $f: X \to Y$ is an injective fibration
of diagrams. By Lemma \ref{lem 7}, all induced maps
\begin{equation*}
f: \hom((EG_{i} \times_{G_{i}} G_{i})_{\bullet},X) \to \hom((EG_{i} \times_{G_{i}} G_{i})_{\bullet},Y)
\end{equation*}
are Bousfield-Kan fibrations of Bousfield-Kan fibrant cosimplicial
spaces, and we want to show that the induced map
\begin{equation*}
\varinjlim_{i} \Tot \hom((EG_{i} \times_{G_{i}} G_{i})_{\bullet},X) 
\to \varinjlim_{i} \Tot \hom((EG_{i} \times_{G_{i}} G_{i})_{\bullet},Y)
\end{equation*}
is a trivial fibration of simplicial sets.

The idea is to show that all lifting problems
\begin{equation*}
\xymatrix{
\partial\Delta^{n} \ar[r] \ar[d] 
& \Tot \hom((EG_{i} \times_{G_{i}} G_{i})_{\bullet},X) \ar[d]^{f} \\
\Delta^{n} \ar[r] \ar@{.>}[ur] & \Tot \hom((EG_{i} \times_{G_{i}} G_{i})_{\bullet},Y)
}
\end{equation*}
can be solved in the filtered colimit. This is equivalent to the
solution of all cosimplicial space lifting problems
\begin{equation*}
\xymatrix{
& \hom((EG_{i} \times_{G_{i}} G_{i})_{\bullet},X^{\Delta^{n}}) \ar[d] \\
\Delta \ar[r] \ar@{.>}[ur] & 
\hom((EG_{i} \times_{G_{i}} G_{i})_{\bullet}, 
X^{\partial\Delta^{n}} \times_{Y^{\partial\Delta^{n}}} Y^{\Delta^{n}})
}
\end{equation*}
in the filtered colimit.

The induced map
\begin{equation*}
X^{\Delta^{n}} \to X^{\partial\Delta^{n}} \times_{Y^{\partial\Delta^{n}}} Y^{\Delta^{n}}
\end{equation*}
is an injective fibration of injective fibrant diagrams which is a
local weak equivalence, between objects which are sectionwise $n$-types.
It therefore suffices to show that all lifting problems
\begin{equation}\label{eq 11}
\xymatrix{
& \hom((EG_{i} \times_{G_{i}} G_{i})_{\bullet},X) \ar[d]^{f} \\
\Delta \ar[r]_-{\alpha} \ar@{.>}[ur] & \hom((EG_{i} \times_{G_{i}} G_{i})_{\bullet},Y)
}
\end{equation}
can be solved in the filtered colimit, for maps $f: X \to Y$ which are
locally trivial injective fibrations of diagrams between injective
fibrant objects, which objects have only finitely many non-trivial
presheaves of homotopy groups.

Suppose that $p: Z \to W$ is a locally trivial fibration of simplicial
presheaves on $G-\mathbf{Set}_{df}$, and suppose given a lifting problem
\begin{equation}\label{eq 12}
\xymatrix{
\partial\Delta^{n} \ar[r] \ar[d] & Z((EG_{i} \times_{G_{i}} G_{i})_{n}) \ar[d]^{p} \\
\Delta^{n} \ar[r] \ar@{.>}[ur] & W((EG_{i} \times_{G_{i}} G_{i})_{n})
}
\end{equation} 
There is a surjection
\begin{equation*}
  U \to (EG_{i} \times_{G_{i}} G_{i})_{n} = \sqcup_{G_{i}^{\times n}}\ G_{i}
\end{equation*}
  of finite discrete $G$-sets 
such that a lift exists in the diagram
\begin{equation*}
\xymatrix{
\partial\Delta^{n} \ar[r] \ar[d] & Z((EG_{i} \times_{G_{i}} G_{i})_{n}) \ar[r] & Z(U) \ar[d]^{p} \\
\Delta^{n} \ar[r] \ar[urr] & W((EG_{i} \times_{G_{i}} G_{i})_{n}) \ar[r] & W(U)
}
\end{equation*} 
There is a transition morphism $\gamma: G_{j} \to G_{i}$ in the pro-group $G$ and a discrete $G$-sets morphism $\sqcup_{G_{j}^{\times n}}\ G_{j} \to U$ such
that the composite
\begin{equation*}
\sqcup_{G_{j}^{\times n}}\ G_{j} \to U \to \sqcup_{G_{i}^{\times n}}\ G_{i} 
\end{equation*}
is the $G$-sets homomorphism which is induced by $\gamma$. It follows that
the lifting problem (\ref{eq 12}) has a solution
\begin{equation*}
\xymatrix{
\partial\Delta^{n} \ar[r] \ar[d] & Z((EG_{i} \times_{G_{i}} G_{i})_{n}) \ar[r]^{\gamma^{\ast}} 
& Z((EG_{j} \times_{G_{j}} G_{j})_{n}) \ar[d]^{p} \\
\Delta^{n} \ar[r] \ar[urr] & W((EG_{i} \times_{G_{i}} G_{i})_{n}) \ar[r]_{\gamma^{\ast}} & W(EG_{j} \times_{G_{j}} G_{j})_{n})
}
\end{equation*} 
after refinement along the induced map $\gamma: EG_{j} \times_{G_{j}} G_{j} \to EG_{i} \times_{G_{i}} G_{i}$.

All induced maps
\begin{equation*}
f: \hom((EG_{i} \times_{G_{i}} G_{i})_{\bullet},X) \to \hom((EG_{i} \times_{G_{i}} G_{i})_{\bullet},Y)
\end{equation*}
are Bousfield-Kan fibrations of cosimplicial spaces by Lemma \ref{lem 7}, as is their
filtered colimit
\begin{equation*}
f_{\ast}: \varinjlim_{i} \hom((EG_{i} \times_{G_{i}} G_{i})_{\bullet},X) 
\to \varinjlim_{i} \hom((EG_{i} \times_{G_{i}} G_{i})_{\bullet},Y).
\end{equation*}
The map $f_{\ast}$ is a weak equivalence of cosimplicial spaces by the
previous paragraph, and is therefore a trivial fibration.

In general, solving the lifting problem
\begin{equation*}
  \xymatrix{
    & Z \ar[d]^{p} \\
    \Delta \ar[r]_{\alpha} \ar@{.>}[ur]  & W
  }
  \end{equation*}
    for a map of cosimplicial spaces $p: Z \to W$ amounts to inductively solving a sequence of lifting problems
    \begin{equation}\label{eq 13}
      \xymatrix{
        \partial\Delta^{n+1} \ar[r] \ar[d] & Z_{n+1} \ar[d] \\
        \Delta^{n+1} \ar[r] \ar@{.>}[ur]
         & Y_{n+1} \times_{M^{n}Y} M^{n}Z
      }
    \end{equation}

It follows from the 
paragraphs above that, given a number $N \geq 0$, there is a structure map
$\gamma: G_{j} \to G_{i}$ for the pro-group $G$, such that the lifting
problems (\ref{eq 13})
associated to lifting a specific map
\begin{equation*}
  \alpha: \Delta \to \hom((EG_{i} \times_{G_{i}} G_{i})_{\bullet},Y)
\end{equation*}
to the total space of the map
\begin{equation*}
f_{\ast}: \hom((EG_{i} \times_{G_{i}} G_{i})_{\bullet},X) 
\to \hom((EG_{i} \times_{G_{i}} G_{i})_{\bullet},Y)
\end{equation*}
have a simultaneous solution in $\hom((EG_{j} \times_{G_{j}} G_{j})_{\bullet},X)$ for $n \leq N$.

If $X$ and $Y$ are sectionwise $N$-types, then the cosimplicial spaces
\begin{equation*}
  \hom((EG_{j}\times_{G_{j}} G_{j},X)_{\bullet},X)\ \text{and}\ \hom((EG_{j}\times_{G_{j}} G_{j},X)_{\bullet},Y)
\end{equation*}
  are sectionwise $N$-types.
The obstructions to the lifting problem
(\ref{eq 13}) for $f_{\ast}$ lie in $\pi_{k}X(\sqcup_{G_{j}^{\times (k+1)}}\ G_{j})$ and in
\begin{equation*}
\pi_{k+1}((\sqcup_{G_{j}^{\times (k+1)}}\ G_{j}) \times_{M^{k}\hom((EG_{j} \times_{G_{j}}G_{j})_{\bullet}, Y)} M^{k}\hom((EG_{j} \times_{G_{j}}G_{j})_{\bullet},X)),
\end{equation*}
which groups are $0$ since $k \geq N$.

It follows that, given a lifting problem (\ref{eq 11}), there is a
structure homomorphism $\gamma: G_{j} \to G_{i}$ of the pro-group $G$
such that the problem (\ref{eq 11}) is solved over $G_{j}$ in the sense
that there is a commutative diagram
\begin{equation*}
\xymatrix@R=14pt{ 
  && \hom((EG_{j}\times_{G_{j}}G_{j})_{\bullet},X) \ar[dd]^{f} \\
  \\
\Delta \ar[r]_-{\alpha} \ar[uurr] 
& \hom((EG_{i} \times_{G_{i}} G_{i})_{\bullet},Y) \ar[r]_-{\gamma^{\ast}} 
& \hom((EG_{j}\times_{G_{j}}G_{j})_{\bullet},Y) 
}
\end{equation*}
\end{proof}

\begin{lemma}\label{lem 11}
Suppose that $p: X \to Y$ is a Bousfield-Kan fibration between
Bousfield-Kan fibrant cosimplicial spaces. Suppose that the diagrams $X$ and $Y$ are sectionwise $N$-types.
Then the spaces $M^{n}X
\times_{M_{n}Y} Y^{n+1}$ are $N$-types.
\end{lemma}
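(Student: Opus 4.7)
The plan is to reduce the statement to two subclaims: first, that the matching objects $M^{n}X$ and $M^{n}Y$ are themselves $N$-types as simplicial sets; and second, that the matching map $s: Y^{n+1} \to M^{n}Y$ is a Kan fibration. Granting these, the ordinary pullback $M^{n}X \times_{M^{n}Y} Y^{n+1}$ coincides with a homotopy pullback (one of its defining legs being a Kan fibration), and a homotopy pullback of $N$-types in simplicial sets is an $N$-type --- for instance by the long exact sequence of homotopy groups attached to a fibre sequence, or equivalently by the fact that Postnikov truncation commutes with homotopy limits.

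The second subclaim is immediate from the hypotheses: since $Y$ is Bousfield-Kan fibrant, the map $Y \to \ast$ is a Bousfield-Kan fibration, which unwinds (using $M^{n}(\ast) = \ast$) to the statement that $Y^{n+1} \to M^{n}Y$ is a Kan fibration for every $n$.

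For the first subclaim I would induct on $n$. The base case is $M^{0}X = X^{0}$, an $N$-type by hypothesis. For the inductive step, $M^{n}X$ is a finite limit built from the spaces $X^{0}, X^{1}, \ldots, X^{n}$ over the diagram of codegeneracy maps; because $X$ is Bousfield-Kan (i.e.\ Reedy) fibrant, this limit decomposes as an iterated pullback in which each stage attaches a new factor $X^{k}$ along a matching map $X^{k} \to M^{k-1}X$, which is a Kan fibration by the same application of Reedy fibrancy as in the previous paragraph. Each such pullback is therefore a homotopy pullback of $N$-types (the $X^{k}$ by hypothesis, the lower matching objects by induction), and so $M^{n}X$ is an $N$-type. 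The same argument handles $M^{n}Y$.

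The hard part will be justifying the inductive decomposition of $M^{n}X$ as an iterated homotopy pullback. The cleanest route is to invoke the Reedy model structure on cosimplicial simplicial sets directly: $\Delta$ is a Reedy category whose inverse part generates the matching diagram, so the matching object of a Reedy fibrant cosimplicial space is simultaneously the ordinary limit and the homotopy limit over the finite category of non-identity surjections $[n+1] \twoheadrightarrow [k]$ with $k \leq n$. Granting this identification, the lemma reduces to the stability of $N$-types under finite homotopy limits, which is standard.
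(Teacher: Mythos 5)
Your proposal is correct, and its skeleton is the same as the paper's: observe that $s\colon Y^{n+1} \to M^{n}Y$ is a Kan fibration because $Y$ is Bousfield--Kan fibrant, show that the matching objects $M^{n}X$ and $M^{n}Y$ are $N$-types, and conclude that the pullback $M^{n}X \times_{M^{n}Y} Y^{n+1}$ is a homotopy pullback of $N$-types and hence an $N$-type. The difference lies in how the matching objects are handled. The paper argues with the partial matching objects $M^{n}_{p}X$: these sit in pullback squares over partial matching maps $(s^{0},\dots,s^{p})$ into $M^{n-1}_{p-1}X$, those partial matching maps are shown to be fibrations by a secondary induction starting from Bousfield--Kan fibrancy, and then $M^{n}X = M^{n}_{n}X$ is an $N$-type by induction on $p$. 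Your route instead filters the matching category itself: adding the surjections $[n+1]\twoheadrightarrow [k]$ in order of increasing $k$ attaches a copy of $X^{k}$ along the full matching map $X^{k}\to M^{k-1}X$ at each stage, and these maps are fibrations directly from Reedy (Bousfield--Kan) fibrancy, with no secondary induction needed --- so the decomposition you flag as the ``hard part'' is in fact correct as you stated it, at the cost of an induction over all lower cosimplicial degrees rather than the paper's induction at fixed degree. Your fallback, quoting the general Reedy-theoretic fact that for a Reedy fibrant object the matching object is the homotopy limit over the matching category together with closure of $N$-types under such finite homotopy limits, is also valid; just be aware that the standard proof of that quoted fact is essentially the same partial-matching-object induction that the paper writes out, so this version gains brevity by outsourcing rather than by a genuinely shorter argument. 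The final pullback step in your proposal agrees with the paper's.
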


\begin{proof}
Recall that $M^{n}X = M^{n}_{n}X$, where $M^{n}_{p}X$ is the iterated
pullback of the maps $s^{i}: X^{n+1} \to X^{n}$ for $i \leq p$. Let
$s: X^{n+1} \to M^{n}_{p}X$ be the map $(s^{0},\dots ,s^{p})$.
There are natural pullback diagrams
\begin{equation*}
\xymatrix{
M^{n}_{p}X \ar[r] \ar[d] & X^{n} \ar[d]^{s} \\
M^{n}_{p-1}X \ar[r] & M^{n-1}_{p-1} X
}
\end{equation*}
The map $s: X^{n+1} \to M^{n}X = M^{n}_{n}X$ is a fibration, so that
all of the maps $s: X^{n} \to M^{n}_{p}X$ are
fibrations by an inductive argument.

Inductively, if the spaces $M^{n-1}_{p-1}X$ and $M^{n}_{p-1}X$
are $N$-types, then $M^{n}_{p}X$ is an $N$-type. It follows that the spaces $M^{n}X$ are $N$-types.

In the pullback diagram
\begin{equation*}
\xymatrix{
M^{n}X \times_{M^{n}Y} Y^{n+1} \ar[r] \ar[d] & Y^{n+1} \ar[d]^{s}  \\
M^{n}X \ar[r] & M^{n}Y
}
\end{equation*}
the map $s$ is a fibration and the spaces $Y^{n+1}$, $M^{n}Y$ and $M^{n}X$ are $N$-types.
Then it follows that the space $M^{n}X \times_{M^{n}Y} Y^{n+1}$
is an $N$-type.
\end{proof}

\section{Pro-objects}

Suppose that $X$ is a simplicial presheaf of Kan complexes on the site $G-\mathbf{Set}_{df}$ of discrete finite $G$-sets, where $G$ is a profinite group. 
Corollary \ref{cor 10} implies that the space
\begin{equation*}
  \varinjlim_{i}\ \mathbf{hom}(EG_{i} \times_{G_{i}} G_{i},X)
\end{equation*}
is weakly equivalent to the space $Z(\ast)$ of global sections of an injective fibrant model $Z$ of $X$, provided that the simplicial presheaf $X$ is a sectionwise $n$-type for some $n$. 

The main (and only) examples of sectionwise $n$-types are the (derived) finite Postnikov sections
\begin{equation*}
  \mathbf{P}_{n}Y = P_{n}\Ex^{\infty}Y
  \end{equation*}
of simplicial presheaves $Y$.
\medskip

If $Y$ happens to be a presheaf of Kan complexes, we skip the derived step and write
\begin{equation*}
  \mathbf{P}_{n}Y = P_{n}Y,
\end{equation*}
where $P_{n}Y$ is the classical Moore-Postnikov section construction \cite[VI.3]{GJ}.

It is a basic property of the Postnikov section construction that the map
\begin{equation*}
  p: Y(U) \to \mathbf{P}_{n}Y(U)
\end{equation*}
is a Kan fibration in each section, which induces isomorphisms
\begin{equation*}
  \pi_{k}(Y(U),x) \xrightarrow{\cong} \pi_{k}(\mathbf{P}_{n}Y(U),x)
\end{equation*}
for all vertices $x \in Y(U)$ and for $0 \leq k \leq n$. Furthermore $\pi_{k}(\mathbf{P}_{n}Y(U),x) = 0$ for all vertices $x$ and $k > n$.

The maps $p$ are arranged into a comparison diagram
\begin{equation*}
  \xymatrix@R=8pt{
    & \mathbf{P}_{n+1}Y \ar[dd]^{\pi} \\
    Y \ar[ur]^{p} \ar[dr]_{p} \\
    & \mathbf{P}_{n}Y
  }
  \end{equation*}
in which all maps are sectionwise Kan fibrations. The tower $\mathbf{P}_{\ast}Y$ of sectionwise fibrations is the Postnikov tower of the presheaf of Kan complexes $Y$.
\medskip

Recall that a {\it pro-object} in a category $\mathcal{C}$ is a functor $I \to \mathcal{C}$, where $I$ is a small left filtered category.

If $Y$ is a presheaf of Kan complexes, then the associated Postnikov tower $\mathbf{P}_{\ast}Y$ is a pro-object in simplicial presheaves.
\medskip

Every pro-object $E: I \to \mathcal{C}$ in a category $\mathcal{C}$
represents a functor $h_{E}: \mathcal{C} \to \mathbf{Set}$, with
\begin{equation*}
  h_{E}(X) = \varinjlim_{i}\ \hom(E_{i},X).
\end{equation*}

A {\it pro-map} $E \to F$ is a natural transformation $h_{F} \to h_{E}$. The pro-objects and pro-maps are the objects and morphisms of the category $pro-\mathcal{C}$, commonly called the pro category in $\mathcal{C}$.

Every object $Z$ in the category $\mathcal{C}$ is a pro-object, defined on the one-point category. A Yoneda Lemma argument shows that a natural transformation $h_{Z} \to h_{E}$ can be identified with an element of the filtered colimit
\begin{equation*}
  \varinjlim_{i}\ \hom(E_{i},Z),
\end{equation*}
  and we usually think of pro-maps $E \to Z$ in this way.

If $F: J \to \mathcal{C}$ is a pro-object and $i \in J$, then there is a pro-map $F \to F_{i}$ which is defined by the image of the identity on $F_{i}$ in the filtered colimit $\varinjlim_{j}\ \hom(F_{j},F_{i})$.

Any pro-map $\phi: E \to F$ can be composed with the canonical maps $F \to F_{i}$, and the map $\phi$ can then be identified with an element of the set
\begin{equation*}
  \varprojlim_{j}\ \varinjlim_{i}\ \hom(E_{i},F_{j}).
\end{equation*}

Every simplicial presheaf is a pro-object in simplicial
presheaves, and the derived Postnikov tower construction
\begin{equation*}
  \xymatrix@R=8pt{
    & \mathbf{P}_{n+1}X \ar[dd] \\
    X \ar[ur] \ar[dr] \\
    & \mathbf{P}_{n}X
  }
\end{equation*}
defines a natural pro-map $X \to \mathbf{P}_{\ast}X$.
\medskip

There is a hierarchy of model
structures for the category of pro-simplicial presheaves, which is
developed in \cite{pro}.

The ``base'' model structure is the Edwards-Hastings model structure,
for which a cofibration $A \to B$ is map that is isomorphic in the
pro category to a monomorphism in a category of diagrams. A weak
equivalence for this structure, an {\it Edwards-Hastings weak
  equivalence}, is a map $f: X \to Y$ of pro-objects (that are defined
on filtered categories $I$ and $J$, respectively) such that the
induced map of filtered colimits
\begin{equation*}
\varinjlim_{j \in J}\ \mathbf{hom}(Y_{j},Z) \to 
\varinjlim_{i \in I}\ \mathbf{hom}(X_{i},Z)
\end{equation*}
is a weak equivalence of simplicial sets for all injective fibrant
simplicial presheaves $Z$. 

Every pro-simplicial presheaf has a functorially defined Postnikov
tower $\mathbf{P}_{\ast}X$, which is again a pro-object, albeit with a
larger indexing category.

It is shown in \cite{pro} that the functor
$X \mapsto \mathbf{P}_{\ast}X$ satisfies the criteria for
Bousfield-Friedlander localization within the Edwards-Hastings model
structure, and thus behaves like stabilization of spectra. In
particular, one has a model structure for which a weak equivalence
(a {\it pro-equivalence}) is a map $X \to Y$ which induces an
Edwards-Hastings equivalence $\mathbf{P}_{\ast}X \to
\mathbf{P}_{\ast}Y$.  This is the {\it pro-equivalence structure} for
pro-simplicial presheaves. It has the same cofibrations as the
Edwards-Hastings structure.

The Edwards-Hastings structure and the pro-equivalence structure both specialize to model structures for pro-objects in simplicial sets. The special case of the Edwards-Hastings structure for simplicial sets was first constructed by Isaksen in \cite{Isaksen3} --- he calls it the strict model structure.
\medskip

We shall need the following:

\begin{lemma}\label{lem 12}
Suppose that the map $f: Z \to W$ of simplicial presheaves is a pro-equivalence. Then it is a local weak equivalence. 
\end{lemma}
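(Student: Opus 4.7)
My plan is to show that the Postnikov sections $\mathbf{P}_n f : \mathbf{P}_n Z \to \mathbf{P}_n W$ are local weak equivalences for every $n \geq 0$, and then to deduce the lemma from the fact that $\pi_k^{sh}(Y) \cong \pi_k^{sh}(\mathbf{P}_n Y)$ for $k \leq n$ and any simplicial presheaf $Y$: letting $n$ grow will force $\pi_k^{sh}(f)$ to be an isomorphism in every degree, so that $f$ is a local weak equivalence.

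First I would unfold the pro-equivalence hypothesis into its Edwards--Hastings form: for every injective fibrant simplicial presheaf $X$, the induced map
\begin{equation*}
\varinjlim_m\ \mathbf{hom}(\mathbf{P}_m W, X) \to \varinjlim_m\ \mathbf{hom}(\mathbf{P}_m Z, X)
\end{equation*}
is a weak equivalence of simplicial sets. I would then restrict attention to test objects $X$ which are sectionwise $n$-types (injective fibrant). The crucial observation is that for any simplicial presheaf $Y$ and any such $X$, the transition maps $\mathbf{hom}(\mathbf{P}_m Y, X) \to \mathbf{hom}(\mathbf{P}_{m+1} Y, X)$ induced by $\mathbf{P}_{m+1}Y \to \mathbf{P}_m Y$ are weak equivalences whenever $m \geq n$: this is the universal property of Postnikov truncation against an $n$-truncated target, since the composite $Y \to \mathbf{P}_{m+1} Y \to \mathbf{P}_m Y$ induces a weak equivalence $\mathbf{hom}(\mathbf{P}_m Y, X) \simeq \mathbf{hom}(Y, X)$ for such $X$, and similarly at level $m+1$, forcing the two-out-of-three conclusion. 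The filtered colimit therefore stabilizes up to weak equivalence at $m = n$, and the Edwards--Hastings condition collapses to
\begin{equation*}
(\mathbf{P}_n f)^{\ast}: \mathbf{hom}(\mathbf{P}_n W, X) \to \mathbf{hom}(\mathbf{P}_n Z, X)
\end{equation*}
being a weak equivalence for every injective fibrant sectionwise $n$-type $X$.

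The second step is to promote this to the assertion that $\mathbf{P}_n f$ is itself a local weak equivalence. Here I would invoke that $\mathbf{P}_n Z$ and $\mathbf{P}_n W$ are themselves sectionwise $n$-types (and that sectionwise $n$-types are preserved by injective fibrant replacement, as noted in the paper), that every simplicial presheaf is cofibrant, and a standard Yoneda-type model-category argument restricted to $n$-truncated fibrant targets: after injective fibrant replacement one extracts, from the family of $\mathbf{hom}$-equivalences above, a homotopy inverse to $\mathbf{P}_n f$ in the sub-homotopy-theory of sectionwise $n$-types, and this is exactly a local weak equivalence between such objects.

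The hard part, and really the only non-formal input, is the stabilization of the filtered colimit at level $n$ described above; it rests on the universal property of Postnikov truncation for sectionwise $n$-type injective fibrant targets, which is checked sectionwise using the classical Moore--Postnikov universal property for Kan complexes and the fact that $\mathbf{hom}(-,X)$ for injective fibrant $X$ detects local weak equivalences. Once this step is in hand, the rest is formal, and the induction on $n$ together with the Postnikov comparison on sheaves of homotopy groups finishes the argument.
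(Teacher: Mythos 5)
Your overall strategy---show that each Postnikov section $\mathbf{P}_{n}f: \mathbf{P}_{n}Z \to \mathbf{P}_{n}W$ is a local weak equivalence and then let $n$ grow, using that $\tilde{\pi}_{k}Y \cong \tilde{\pi}_{k}\mathbf{P}_{n}Y$ for $k \leq n$---is the same as the paper's, but you reach the middle step by a genuinely different route. The paper gets it formally from the machinery of \cite{pro}: the canonical map $\mathbf{P}_{n}Z \to \mathbf{P}_{n}\mathbf{P}_{\ast}Z$ is an Edwards--Hastings equivalence, the Postnikov section functors preserve Edwards--Hastings equivalences (Lemma 25 of \cite{pro}), so each $\mathbf{P}_{n}Z \to \mathbf{P}_{n}W$ is an Edwards--Hastings equivalence of constant pro-objects and hence a local weak equivalence. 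You instead unwind the Edwards--Hastings condition, observe that for an injective fibrant sectionwise $n$-type test object $X$ the colimits $\varinjlim_{m}\mathbf{hom}(\mathbf{P}_{m}W,X) \to \varinjlim_{m}\mathbf{hom}(\mathbf{P}_{m}Z,X)$ stabilize at level $n$, and then recover that $\mathbf{P}_{n}f$ is a local weak equivalence by testing only against injective fibrant models of $\mathbf{P}_{n}Z$ and $\mathbf{P}_{n}W$ themselves (legitimate, since those models are again sectionwise $n$-types by the preservation statement the paper records) and extracting a homotopy inverse; that Yoneda-style step is correct as sketched. What your route buys is an argument that is nearly self-contained and that makes explicit where truncatedness of the test objects enters; what the paper's route buys is brevity, at the price of citing \cite{pro}.

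The one step you should not leave as an assertion is the stabilization claim itself: that $p^{\ast}: \mathbf{hom}(\mathbf{P}_{m}Y,X) \to \mathbf{hom}(Y,X)$ is a weak equivalence for $m \geq n$ when $X$ is injective fibrant and a sectionwise $n$-type. The claim is true, but your justification (``the universal property of Postnikov truncation, checked sectionwise'') is not yet an argument: $\mathbf{hom}(-,X)$ is a global function complex, not a sectionwise one, and $Y \to \mathbf{P}_{m}Y$ is not a monomorphism, so the classical Moore--Postnikov property applied section by section does not directly control it. Two honest repairs: note that $X$ is injective fibrant as a diagram (Remark \ref{rem 6}), replace $Y \to \mathbf{P}_{m}Y$ by a cofibration up to sectionwise equivalence, and run obstruction theory against the relative filtration, using that the relative homotopy presheaves vanish in degrees $\leq m+1$ while $\pi_{k}X = 0$ sectionwise for $k > n$; or invoke a truncated ($n$-equivalence) localization of the relevant model structure, in which $Y \to \mathbf{P}_{m}Y$ is a weak equivalence of cofibrant objects and $X$ is fibrant, so the simplicial model structure gives the equivalence of function complexes. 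Either way this is a genuine lemma---it is essentially the content that the paper outsources to \cite{pro}---and your write-up should prove it rather than gesture at it; with that supplied, the rest of your argument goes through.
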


\begin{corollary}\label{cor 13}
Suppose that the map $f: Z \to W$ of simplicial sets is a pro-equivalence. Then $f$ is a weak equivalence.
\end{corollary}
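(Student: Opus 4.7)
The plan is to deduce Corollary \ref{cor 13} from Lemma \ref{lem 12} by realizing simplicial sets as simplicial presheaves on a suitable trivial site, and observing that both the notion of pro-equivalence and the notion of local weak equivalence specialize correctly under this identification.

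Concretely, I would first fix a ``trivial'' site $\mathcal{T}$, namely the one-object, one-morphism category equipped with the chaotic topology (equivalently, take $G$ to be the trivial profinite group, so that $G-\mathbf{Set}_{df}$ has only the empty set and the one-point set up to isomorphism, and every presheaf is automatically a sheaf). Under this identification, the category $s\Pre(\mathcal{T})$ is equivalent to the category of simplicial sets, and the stalk functor $\pi^{\ast}$ is (up to natural isomorphism) the identity. In particular, a map of simplicial presheaves on $\mathcal{T}$ is a local weak equivalence if and only if the underlying map of simplicial sets is a weak equivalence in the usual sense.

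Next I would verify that the pro-equivalence structure on $\pro\text{-}s\Pre(\mathcal{T})$ coincides with the pro-equivalence structure on pro-simplicial sets recalled just before the statement. This is essentially by construction: the Edwards-Hastings model structure is defined by testing against injective fibrant targets, and on $\mathcal{T}$ the injective fibrant objects are exactly the Kan complexes; the Postnikov section functor $\mathbf{P}_{\ast}$ and the Bousfield-Friedlander localization procedure then agree with their classical counterparts. Hence a pro-equivalence of simplicial sets, viewed as a map in $\pro\text{-}s\Pre(\mathcal{T})$, is still a pro-equivalence there.

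Given these identifications, the proof is immediate: if $f: Z \to W$ is a pro-equivalence of simplicial sets, then, viewed as a map of simplicial presheaves on $\mathcal{T}$, it is a pro-equivalence. By Lemma \ref{lem 12} it is a local weak equivalence on $\mathcal{T}$, and by the first step this is precisely a weak equivalence of simplicial sets. The only step that requires any care is the bookkeeping identification of the two Postnikov/pro-equivalence structures, but since the Edwards-Hastings and pro-equivalence model structures are constructed uniformly across sites and the localization reference \cite{pro} explicitly specializes to pro-simplicial sets (recovering Isaksen's strict structure), no new content is required.
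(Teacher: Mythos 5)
Your proposal is correct and is essentially the paper's own argument: the corollary is read off from Lemma \ref{lem 12} by viewing simplicial sets as simplicial presheaves on the trivial (one-point, chaotic) site, where local weak equivalences are ordinary weak equivalences, and the identification of the pro-equivalence structures is exactly what the paper's preceding remark (that the Edwards--Hastings and pro-equivalence structures specialize to pro-simplicial sets, recovering Isaksen's strict structure) supplies. The only quibble is the parenthetical claim that $G$-$\mathbf{Set}_{df}$ for trivial $G$ has only $\emptyset$ and $\ast$; it is the category of all finite sets, but this does not affect your main route through the one-object chaotic site.
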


\begin{proof}[Proof of Lemma \ref{lem 12}]
The natural map $Z
\to \mathbf{P}_{\ast}Z$ induces an Edwards-Hastings weak equivalence
\begin{equation*}
\mathbf{P}_{n}Z \to \mathbf{P}_{n}\mathbf{P}_{\ast}Z
\end{equation*}
for all $n \geq 0$. The induced map
\begin{equation*}
\mathbf{P}_{n}\mathbf{P}_{\ast}Z \to \mathbf{P}_{n}\mathbf{P}_{\ast}W
\end{equation*}
is an Edwards-Hastings weak equivalence, since the
Postnikov section functors preserve Edwards-Hastings equivalences
(Lemma 25 of \cite{pro}). It follows that all simplicial presheaf maps
\begin{equation*}
\mathbf{P}_{n}Z \to \mathbf{P}_{n}W
\end{equation*}
are Edwards-Hastings weak equivalences, and hence local weak
equivalences of simplicial presheaves. This is true for all $n$, so the map $f: Z
\to W$ is a local weak equivalence.
\end{proof}

\section{Galois descent}

Suppose again that $G = \{ G_{i} \}$ is a profinite group, and let one of the groups $G_{i}$ represent a sheaf on the category $G-\mathbf{Set}_{df}$ of discrete finite modules.

Recall that the group $G_{i}$ acts on the sheaf $G_{i}$ which is represented by the $G$-set $G_{i}$, and the canonical map of simplicial sheaves $EG_{i} \times_{G_{i}} G_{i} \to \ast$ is a local weak equivalence, where $\ast$ is the terminal simplicial sheaf.

It follows that, if $Z$ is injective fibrant, then the induced map
\begin{equation*}
  \mathbf{hom}(\ast,Z) \to \mathbf{hom}(EG_{i} \times_{G_{i}} G_{i},Z)
\end{equation*}
between function complexes is a weak equivalence of simplicial sets. There is an identification $\mathbf{hom}(\ast,Z) = Z(\ast)$, so we have a weak equivalence
\begin{equation*}
  Z(\ast) \xrightarrow{\simeq} \mathbf{hom}(EG_{i} \times_{G_{i}} G_{i},Z)
\end{equation*}
between global sections of $Z$ and the homotopy fixed points for the action of $G_{i}$ on the simplicial set $Z(G_{i})$. This is the {\it finite descent property} for injective fibrant simplicial presheaves $Z$.

More generally, if $X$ is a presheaf of Kan complexes on $G-\mathbf{Set}_{df}$, we say that $X$ {\it satisfies finite descent} if the induced map
\begin{equation*}
  X(\ast) \to \mathbf{hom}(EG_{i} \times_{G_{i}} G_{i},X)
\end{equation*}
is a weak equivalence for each of the groups $G_{i}$ making up the profinite group $G$. We have just seen that all injective fibrant simplicial presheaves satisfy finite descent.
\medskip

Recall (from Section 1) that, if $f: Z \to W$ is a local weak equivalence between injective fibrant objects, then $f$ is a sectionwise equivalence.
It follows that any two injective fibrant models $j: X \to Z$ and $j':X \to Z'$ of a fixed simplicial presheaf $X$ are sectionwise equivalent.

One says that a simplicial presheaf $X$ {\it satisfies descent} if some (hence any) injective fibrant model $j: X \to Z$ is a sectionwise equivalence.

The general relationship between descent and finite descent is the following:

\begin{lemma}\label{lem 14}
Suppose that the presheaf of Kan complexes $X$ on $G-\mathbf{Set}_{df}$ satisfies descent. Then it satisfies finite descent.
\end{lemma}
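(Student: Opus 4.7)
The plan is to compare the finite descent map for $X$ with the (automatic) finite descent map for an injective fibrant model, and conclude by two-out-of-three. Pick an injective fibrant model $j\colon X\to Z$; by the descent hypothesis, $j$ is a sectionwise weak equivalence. The canonical map $\eta\colon EG_{i}\times_{G_{i}} G_{i}\to\ast$ is a local weak equivalence (Example~1 after Lemma~\ref{lem 5}), and naturality of the function complex gives the commutative square
\begin{equation*}
\xymatrix{
X(\ast) \ar[r]^-{\eta^{\ast}} \ar[d]_{j} & \mathbf{hom}(EG_{i}\times_{G_{i}} G_{i},X) \ar[d]^{j_{\ast}} \\
Z(\ast) \ar[r]_-{\eta^{\ast}} & \mathbf{hom}(EG_{i}\times_{G_{i}} G_{i},Z).
}
\end{equation*}
The left vertical is a weak equivalence because $j$ is sectionwise. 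The bottom horizontal is a weak equivalence by the closing observation of Section~1: a local weak equivalence with an injective fibrant target induces a weak equivalence on function complexes. By two-out-of-three, the top horizontal --- the finite descent map for $X$ at $G_{i}$ --- is a weak equivalence if and only if $j_{\ast}$ is.

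For the remaining step, I would invoke the identification of Example~2 after the Remark in Section~2: for any presheaf of Kan complexes $Y$, the function complex $\mathbf{hom}(EG_{i}\times_{G_{i}} G_{i},Y)$ is the homotopy fixed points space $Y(G_{i})^{hG_{i}}$, realized as the $\Tot$ of the cobar cosimplicial space with $n$-th term the finite product $\prod_{G_{i}^{\times n}} Y(G_{i})$. Since $j\colon X(G_{i})\to Z(G_{i})$ is a $G_{i}$-equivariant weak equivalence of Kan complexes, the induced map is a levelwise weak equivalence of cosimplicial spaces. By Lemma~\ref{lem 7}, the target is Bousfield-Kan fibrant because $Z$ is injective fibrant; the source is Bousfield-Kan fibrant by the classical fibrancy of the cobar construction for a finite group acting on a Kan complex. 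Taking $\Tot$ of a levelwise weak equivalence of Bousfield-Kan fibrant cosimplicial spaces gives a weak equivalence, so $j_{\ast}$ is a weak equivalence, as required.

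The main subtlety is precisely the Bousfield-Kan fibrancy of the source cosimplicial space: $X$ need not be injective fibrant as a diagram, so Lemma~\ref{lem 7} does not apply to it directly. The cleanest in-house fix, staying entirely within the machinery already developed in the paper, is to replace $X$ by a diagram injective fibrant model $X\to X'$ (a sectionwise trivial cofibration to a diagram-injective-fibrant object, cf.\ Remark~\ref{rem 6}); this factors $j$ as $X\to X'\to Z$ through an object to which Lemma~\ref{lem 7} does apply, and the resulting $\Tot$-level comparisons chain together to give $j_{\ast}$. Everything else in the proof is formal manipulation of the comparison square and the closing observation of Section~1.
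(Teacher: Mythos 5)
Your proof is correct and follows essentially the same route as the paper's: the same comparison square (sectionwise equivalence on global sections, finite descent for the injective fibrant $Z$), with $j_{\ast}$ identified as the homotopy fixed point comparison induced by the $G_{i}$-equivariant weak equivalence $X(G_{i}) \to Z(G_{i})$. The paper simply asserts homotopy invariance of the homotopy fixed points at that step, whereas you spell out the Bousfield--Kan fibrancy of the cobar cosimplicial space; that extra care is sound and harmless, but not a different argument.
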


\begin{proof}
  Take an injective fibrant model $j: X \to Z$, and form the diagram
  \begin{equation*}
    \xymatrix{
      X(\ast) \ar[r]^{j}_{\simeq} \ar[d] & Z(\ast) \ar[d]^{\simeq} \\
      \mathbf{hom}(EG_{i} \times_{G_{i}} G_{i},X) \ar[r]_{j_{\ast}} & \mathbf{hom}(EG_{i} \times_{G_{i}} G_{i},Z)
    }
    \end{equation*}
  The map $j_{\ast}$ coincides with the map
\begin{equation*}
  \hoinvlim_{G_{i}}\ X(G_{i}) \to \hoinvlim_{G_{i}}\ Z(G_{i})
\end{equation*}
of homotopy fixed point spaces which is defined by the $G_{i}$-equivariant weak equivalence $X(G_{i}) \to Z(G_{i})$, and is therefore a weak equivalence. It follows that the map
\begin{equation*}
  X(\ast) \to \mathbf{hom}(EG_{i}\times_{G_{i}}G_{i},X)
\end{equation*}
is a weak equivalence.
\end{proof}

There is a converse for Lemma \ref{lem 14}, for a simplicial presheaf which has only finitely many non-trivial presheaves of homotopy groups.
The following statement is a 
consequence of Corollary \ref{cor 10}:

\begin{corollary}\label{cor 15}
Suppose that $X$ is a presheaf of Kan complexes on $G-\mathbf{Set}_{df}$, and that $X$ is a sectionwise $n$-type for some $n$. Suppose that $X$ satisfies finite descent, and suppose that $j: X \to Z$ is an injective fibrant model. Then the map $j: X(\ast) \to Z(\ast)$ in global sections is a weak equivalence.
\end{corollary}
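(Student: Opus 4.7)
The plan is to build a commutative square that isolates the global-sections map $j : X(\ast) \to Z(\ast)$ as the only potentially non-equivalence among four maps, and then invoke two-out-of-three. Specifically, consider
\begin{equation*}
  \xymatrix{
    X(\ast) \ar[r]^-{j} \ar[d]_{\eta_X} & Z(\ast) \ar[d]^{\eta_Z} \\
    \varinjlim_{i}\ \mathbf{hom}(EG_{i} \times_{G_{i}} G_{i},X) \ar[r]_-{j_{\ast}} & \varinjlim_{i}\ \mathbf{hom}(EG_{i} \times_{G_{i}} G_{i},Z)
  }
\end{equation*}
where the vertical maps are induced by the canonical projections $EG_{i} \times_{G_{i}} G_{i} \to \ast$ followed by the structure maps of the colimit.

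The bottom horizontal map $j_{\ast}$ is a weak equivalence by Corollary \ref{cor 10}, since $X$ is a sectionwise $n$-type and $j : X \to Z$ is an injective fibrant model (so $Z$ is also a sectionwise $n$-type, as noted in the discussion following Theorem \ref{th 1}).

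For the right vertical map $\eta_Z$: because $Z$ is injective fibrant and each $EG_{i} \times_{G_{i}} G_{i} \to \ast$ is a local weak equivalence between cofibrant objects (Example 1 in Section 1), each induced map $Z(\ast) = \mathbf{hom}(\ast,Z) \to \mathbf{hom}(EG_{i} \times_{G_{i}} G_{i},Z)$ is a weak equivalence of simplicial sets, by the simplicial model structure recalled at the end of Section 1. Filtered colimits of weak equivalences of simplicial sets are weak equivalences, so $\eta_Z$ is a weak equivalence. For the left vertical map $\eta_X$: by the finite descent assumption, each map $X(\ast) \to \mathbf{hom}(EG_{i} \times_{G_{i}} G_{i},X)$ is a weak equivalence, and again the filtered colimit $\eta_X$ is a weak equivalence (using that $\varinjlim_i X(\ast) \cong X(\ast)$).

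With three of the four sides of the square established as weak equivalences, the two-out-of-three property forces $j : X(\ast) \to Z(\ast)$ to be a weak equivalence. There is no serious obstacle here; the only subtlety is to make sure one knows that $Z$ is a sectionwise $n$-type in order to apply Corollary \ref{cor 10}, and this is precisely the general fact recorded in the paragraph following Theorem \ref{th 1}.
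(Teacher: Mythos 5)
Your proof is correct and is essentially the argument given in the paper: the same square comparing $X(\ast) \to Z(\ast)$ with the colimits of homotopy fixed point spaces, with the bottom map an equivalence by Corollary \ref{cor 10}, the vertical maps equivalences by finite descent for $X$ and for the injective fibrant $Z$, and two-out-of-three finishing the proof. Your added remarks (that $Z$ is a sectionwise $n$-type and that $Z$ satisfies finite descent because it is injective fibrant) are exactly the facts the paper records just before the corollary, so nothing is missing.
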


\begin{proof}
  Form the diagram
  \begin{equation}\label{eq 14}
    \xymatrix{
      X(\ast) \ar[r]^{j} \ar[d] & Z(\ast) \ar[d] \\
      \varinjlim_{i}\ \mathbf{hom}(EG_{i} \times_{G_{i}} G_{i},X) \ar[r]_{j_{\ast}}^{\simeq} & \varinjlim_{i}\ \mathbf{hom}(EG_{i} \times_{G_{i}} G_{i},Z)
    }
  \end{equation}
  The map $j_{\ast}$ is a weak equivalence by Corollary \ref{cor 10}.
  The maps
\begin{equation*}
  X(\ast) \to \mathbf{hom}(EG_{i}\times_{G_{i}}G_{i},X)\enskip \text{and}\enskip
  Z(\ast) \to \mathbf{hom}(EG_{i}\times_{G_{i}}G_{i},Z)
\end{equation*}
are weak equivalences since $X$ and $Z$ satisfy finite descent, so the vertical maps in the diagram (\ref{eq 14}) are weak equivalences. It follows that $j: X(\ast) \to Z(\ast)$ is a weak equivalence.
\end{proof}

The proof of Corollary \ref{cor 15} also implies the following:

\begin{corollary}\label{cor 16}
  Suppose that $X$ is a presheaf of Kan complexes on $G-\mathbf{Set}_{df}$, and that $X$ is a sectionwise $n$-type for some $n$. Suppose that $j: X \to Z$ is an injective fibrant model. Then the map $j: X(\ast) \to Z(\ast)$ is weakly equivalent to the map
  \begin{equation*}
    X(\ast) \to \varinjlim_{i}\ \mathbf{hom}(EG_{i}\times_{G_{i}}G_{i},X).
    \end{equation*}
  \end{corollary}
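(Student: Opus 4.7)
The plan is to reuse the naturality square from the proof of Corollary \ref{cor 15}, but without invoking the finite descent hypothesis on $X$. The point is that among the four maps in that square, the two which do not involve $X$ on both ends are always weak equivalences under the standing assumption that $X$ is a sectionwise $n$-type; this identifies the map $j: X(\ast) \to Z(\ast)$ with the canonical map $X(\ast) \to \varinjlim_{i}\ \mathbf{hom}(EG_{i} \times_{G_{i}} G_{i},X)$ in the homotopy category.

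Concretely, I would form the commutative diagram
\begin{equation*}
  \xymatrix{
    X(\ast) \ar[r]^-{j} \ar[d] & Z(\ast) \ar[d]^{\simeq} \\
    \varinjlim_{i}\ \mathbf{hom}(EG_{i} \times_{G_{i}} G_{i},X) \ar[r]_-{j_{\ast}}^-{\simeq} & \varinjlim_{i}\ \mathbf{hom}(EG_{i} \times_{G_{i}} G_{i},Z)
  }
\end{equation*}
whose vertical maps are induced by the canonical simplicial presheaf maps $EG_{i} \times_{G_{i}} G_{i} \to \ast$ and the identification $\mathbf{hom}(\ast,-) = (-)(\ast)$. The bottom map $j_{\ast}$ is a weak equivalence by Corollary \ref{cor 10}, using that $X$ (hence $Z$, by the general fact cited after Corollary \ref{cor 2}) is a sectionwise $n$-type. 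For the right vertical map, note that since $Z$ is injective fibrant and each map $EG_{i} \times_{G_{i}} G_{i} \to \ast$ is a local weak equivalence of cofibrant objects, each induced map $Z(\ast) \to \mathbf{hom}(EG_{i} \times_{G_{i}} G_{i},Z)$ is a weak equivalence of simplicial sets; filtered colimits preserve weak equivalences, so the right vertical map is a weak equivalence as well.

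With two sides of the square known to be weak equivalences, the square exhibits the top map $j: X(\ast) \to Z(\ast)$ as weakly equivalent to the left vertical map $X(\ast) \to \varinjlim_{i}\ \mathbf{hom}(EG_{i} \times_{G_{i}} G_{i},X)$, which is exactly the assertion. There is no genuine obstacle to this argument beyond what was already handled in Corollary \ref{cor 15}: the only subtlety is that the left vertical map need not itself be a weak equivalence (precisely because finite descent for $X$ is not assumed), but this is not needed, as the conclusion concerns only the weak equivalence class of the map $j$ in the homotopy category.
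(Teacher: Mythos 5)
Your proposal is correct and is essentially the paper's own argument: the paper derives Corollary \ref{cor 16} from the same square (\ref{eq 14}) used for Corollary \ref{cor 15}, using only that $j_{\ast}$ is a weak equivalence by Corollary \ref{cor 10} and that the injective fibrant object $Z$ satisfies finite descent, so the right vertical map is a weak equivalence. Your observation that the left vertical map need not be an equivalence, and is not needed, matches the intended reading.
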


We can translate the finite descent concept to \'etale sites for fields: a presheaf of Kan complexes $X$ on the finite \'etale site $fet\vert_{k}$  of a field $k$ {\it satisfies finite descent} if, for any finite Galois extension $L/k$ with Galois group $G$, the local weak equivalence $EG \times_{G} \Sp(L) \to \ast$ induces a weak equivalence
\begin{equation}\label{eq 15}
  X(k) \to \mathbf{hom}(EG \times_{G} \Sp(L),X) = \hoinvlim_{G}\ X(L).
\end{equation}

\begin{remark}\label{rem 17}
  We have already seen arguments for the following statements:
  \smallskip
  
  \noindent
1)\ Every injective fibrant simplicial presheaf $Z$ on $fet\vert_{k}$ satisfies descent and satisfies finite descent.
\smallskip

\noindent
2)\ If a presheaf of Kan complexes $X$ on $fet\vert_{k}$ satisfies descent, then it satisfies finite descent.
\end{remark}

Theorem \ref{th 9} and its corollaries also translate directly.

\begin{theorem}\label{th 18}
Suppose that $f: X \to Y$ is a local weak equivalence between
presheaves of Kan complexes on the site $fet\vert_{k}$, and that $X$
and $Y$ are sectionwise $n$-types. Then the induced map
\begin{equation*}
f_{\ast}: \varinjlim_{L/k}\ \mathbf{hom}(EG \times_{G} \Sp(L),X) \to 
\varinjlim_{L/k}\ \mathbf{hom}(EG \times_{G} \Sp(L),Y)
\end{equation*}
is a weak equivalence.
\end{theorem}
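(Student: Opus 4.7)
The plan is to reduce Theorem \ref{th 18} to Theorem \ref{th 9} by invoking the equivalence of sites
\begin{equation*}
  fet\vert_{k} \cong G_{k}-\mathbf{Set}_{df},
\end{equation*}
where $G_{k}$ is the absolute Galois group of $k$, recalled in the discussion preceding Lemma \ref{lem 5}. First I would check that this equivalence, together with the second example after Lemma \ref{lem 5}, identifies the cofinal system of covers $\Sp(L) \to \Sp(k)$ indexed by finite Galois extensions $L/k$ with the system of covers $G_{i} \to \ast$ indexed by the transition diagram of $G_{k} = \varprojlim_{i}\ G_{i}$. Under this identification $G = G(L/k)$ becomes $G_{i}$, and the canonical local weak equivalences
\begin{equation*}
  EG \times_{G} \Sp(L) \to \check{C}(L), \qquad EG_{i} \times_{G_{i}} G_{i} \to \check{C}(G_{i})
\end{equation*}
correspond to each other, so the simplicial presheaves of which we are taking function complexes match up.

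Next, because the indexing category of finite Galois extensions of $k$ (inside a fixed algebraic closure) is cofinal in the indexing diagram $I$ for $G_{k}$, the two filtered colimits
\begin{equation*}
  \varinjlim_{L/k}\ \mathbf{hom}(EG \times_{G} \Sp(L),X) \quad \text{and} \quad \varinjlim_{i}\ \mathbf{hom}(EG_{i} \times_{G_{i}} G_{i},X)
\end{equation*}
coincide naturally in $X$, and similarly for $Y$. The hypothesis that $X$ and $Y$ are sectionwise $n$-types, the Kan complex hypothesis, and the local weak equivalence hypothesis for $f: X \to Y$ transport across the site equivalence without change, since all of these conditions are defined purely in terms of the sheaf-theoretic structure (which is preserved, and in particular stalks are preserved as in Section 1).

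With these identifications in place, the conclusion of Theorem \ref{th 9} applied to $f: X \to Y$ viewed as a map of presheaves of Kan complexes on $G_{k}-\mathbf{Set}_{df}$ gives exactly the statement that $f_{\ast}$ is a weak equivalence on the étale side. The only substantive work, which I do not expect to be an obstacle but which requires care, is verifying cofinality of the system of finite Galois extensions inside $I$; this is standard and amounts to the observation that every finite étale cover of $\Sp(k)$ is dominated by a finite Galois cover, matching the fact that every finite quotient of $G_{k}$ factors through some $G_{i}$ in the defining diagram.
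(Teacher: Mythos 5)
Your proposal is correct and is essentially the paper's own argument: the paper proves Theorem \ref{th 9} on $G-\mathbf{Set}_{df}$ and then simply asserts that it ``translates directly'' to $fet\vert_{k}$ via the isomorphism of sites $fet\vert_{k} \cong G_{k}-\mathbf{Set}_{df}$, which is exactly the identification (of covers, Borel constructions, indexing systems, and hypotheses) that you spell out.
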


The colimits in the statement of Theorem \ref{th 18} are indexed over
finite Galois extensions $L/k$ in the algebraic closure $\Omega$, with Galois groups
$G=G(L/k)$. Similar indexing will be used for all statements that
follow.

\begin{corollary}\label{cor 19}
Suppose that $X$ is a presheaf of Kan complexes on $fet\vert_{k}$, and
that $X$ is a sectionwise $n$-type. Let $j:
X \to Z$ be an injective fibrant model. Then the map $j$ induces a
weak equivalence
\begin{equation*}
j_{\ast}: \varinjlim_{L/k}\ \mathbf{hom}(EG \times_{G} \Sp(L),X) \to 
\varinjlim_{L/k}\ \mathbf{hom}(EG \times_{G} \Sp(L),Z).
\end{equation*}
\end{corollary}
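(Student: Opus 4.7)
The plan is to deduce Corollary \ref{cor 19} directly from Theorem \ref{th 18}, in exactly the same way that Corollary \ref{cor 10} was deduced from Theorem \ref{th 9}. Under the isomorphism $fet\vert_{k} \cong G_{k}\text{-}\mathbf{Set}_{df}$ for the absolute Galois group $G_{k}$, the colimit $\varinjlim_{L/k}\ \mathbf{hom}(EG \times_{G} \Sp(L),-)$ on the finite \'etale site is identified with the colimit $\varinjlim_{i}\ \mathbf{hom}(EG_{i} \times_{G_{i}} G_{i},-)$ indexed over the finite Galois groups in the profinite tower for $G_{k}$. In particular, Theorem \ref{th 18} applies verbatim to any local weak equivalence between presheaves of Kan complexes on $fet\vert_{k}$ which are sectionwise $n$-types.

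Given this, the only nontrivial point is to verify that $Z$ is itself a sectionwise $n$-type, since $X$ is by hypothesis and $j$ is a local weak equivalence. This is precisely the preservation property recorded in the paragraph following Corollary \ref{cor 2} (and proved in \cite[Prop.~6.11]{GECT}): if $X$ is a sectionwise $n$-type and $j: X \to Z$ is an injective fibrant model, then $Z$ is a sectionwise $n$-type. The argument is a Postnikov tower induction, reducing to the Eilenberg--Mac Lane case $K(A,n)$ where fibrant models have homotopy groups identified with sheaf cohomology groups $H^{n-j}(U,\tilde{A}\vert_{U})$, which vanish above degree $n$.

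With $X$ and $Z$ both sectionwise $n$-types and $j: X \to Z$ a local weak equivalence, Theorem \ref{th 18} immediately yields that
\begin{equation*}
j_{\ast}: \varinjlim_{L/k}\ \mathbf{hom}(EG \times_{G} \Sp(L),X) \to
\varinjlim_{L/k}\ \mathbf{hom}(EG \times_{G} \Sp(L),Z)
\end{equation*}
is a weak equivalence of simplicial sets, which completes the proof.

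The only place where real work is hidden is the $n$-type preservation under injective fibrant replacement, since this is a global fact about the local homotopy theory that is not obvious from the bare definition of an injective fibration. Once this citation is in hand, however, the corollary is a formal consequence of Theorem \ref{th 18}, so no further obstacles arise.
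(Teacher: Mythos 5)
Your proposal is correct and matches the paper's own route: Corollary \ref{cor 19} is the translation of Corollary \ref{cor 10} to the finite \'etale site, obtained by applying Theorem \ref{th 18} to the local weak equivalence $j: X \to Z$ once one knows that $Z$ is again a sectionwise $n$-type, which is exactly the cited fact from \cite[Prop. 6.11]{GECT} (Postnikov induction plus the $K(A,n)$ computation). Nothing further is needed.
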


\begin{corollary}\label{cor 20}
Suppose that $X$ is a presheaf of Kan complexes on $fet\vert_{k}$, and that $X$ is a sectionwise $n$-type. Suppose that $X$ satisfies finite descent, and that $j: X \to Z$ is an injective fibrant model. Then the map $j: X(k) \to Z(k)$ in global sections is a weak equivalence.
\end{corollary}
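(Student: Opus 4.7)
The plan is to mimic the proof of Corollary \ref{cor 15} verbatim, but replace the profinite-group colimit by the colimit over finite Galois extensions $L/k$, and appeal to Corollary \ref{cor 19} (the finite \'etale translation of Corollary \ref{cor 10}) in place of Corollary \ref{cor 10}. First I would form the square
\begin{equation*}
\xymatrix{
X(k) \ar[r]^{j} \ar[d] & Z(k) \ar[d] \\
\varinjlim_{L/k}\ \mathbf{hom}(EG \times_{G} \Sp(L),X) \ar[r]_{j_{\ast}} & \varinjlim_{L/k}\ \mathbf{hom}(EG \times_{G} \Sp(L),Z)
}
\end{equation*}
in which the vertical maps are induced by the canonical local weak equivalences $EG \times_{G} \Sp(L) \to \ast$ (Example 2 of Section~1), and the bottom map is the comparison $j_{\ast}$ induced by $j$. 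The goal is to show the three maps $j_{\ast}$, left vertical, right vertical are weak equivalences of simplicial sets, and then conclude by two-out-of-three that $j: X(k) \to Z(k)$ is a weak equivalence.

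For the bottom map, I would invoke Corollary \ref{cor 19}: since $X$ is a sectionwise $n$-type and $Z$ is an injective fibrant model of $X$, the observation recorded in the introduction (and used to prove Corollary \ref{cor 10}) shows that $Z$ is again a sectionwise $n$-type, so Theorem \ref{th 18}/Corollary \ref{cor 19} applies and $j_{\ast}$ is a weak equivalence.

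For the left vertical map, the finite descent hypothesis for $X$ says that each of the maps
\begin{equation*}
X(k) \to \mathbf{hom}(EG \times_{G} \Sp(L),X)
\end{equation*}
is a weak equivalence, as $L/k$ runs over finite Galois extensions inside $\Omega$; filtered colimits of weak equivalences of simplicial sets are weak equivalences, so the left vertical arrow is a weak equivalence. For the right vertical map, the same argument applies: $Z$ is injective fibrant, so by Remark \ref{rem 17} (or directly from the finite descent property of injective fibrant objects established at the start of Section~4) each map $Z(k) \to \mathbf{hom}(EG \times_{G} \Sp(L),Z)$ is a weak equivalence, and the filtered colimit of these weak equivalences is a weak equivalence.

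The argument is essentially formal once Corollary \ref{cor 19} is in hand, so there is no real obstacle; the only point requiring care is the implicit use of the fact that $Z$ inherits the sectionwise $n$-type property from $X$, which is precisely what allows Corollary \ref{cor 19} to be applied to the local weak equivalence $j$.
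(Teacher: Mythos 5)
Your proposal is correct and is essentially the paper's own argument: the paper proves the profinite analogue (Corollary \ref{cor 15}) by forming exactly this square, using Corollary \ref{cor 10} for the bottom map and finite descent of $X$ and of the injective fibrant $Z$ for the vertical maps, and then states that Theorem \ref{th 9} and its corollaries ``translate directly'' to $fet\vert_{k}$, which is your Corollary \ref{cor 19} step. The only additions you make are to spell out the filtered-colimit-of-weak-equivalences point and the inheritance of the sectionwise $n$-type property by $Z$, both of which are implicit in the paper's proof.
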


\begin{corollary}\label{cor 21}
  Suppose that $X$ is a presheaf of Kan complexes on $fet\vert_{k}$, and that $X$ is a sectionwise $n$-type. Suppose that $j: X \to Z$ is an injective fibrant model. Then the map $j: X(k) \to Z(k)$ is weakly equivalent to the map
  \begin{equation*}
    X(k) \to \varinjlim_{L/k}\ \mathbf{hom}(EG\times_{G} \Sp(L),X).
    \end{equation*}
  \end{corollary}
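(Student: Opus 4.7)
The plan is to reproduce the square-chasing argument from the proof of Corollary \ref{cor 15}/Corollary \ref{cor 20}, but without using the finite descent hypothesis on $X$. I would first contemplate the commutative diagram
\begin{equation*}
    \xymatrix{
      X(k) \ar[r]^-{j} \ar[d] & Z(k) \ar[d] \\
      \varinjlim_{L/k}\ \mathbf{hom}(EG \times_{G} \Sp(L),X) \ar[r]_-{j_{\ast}} & \varinjlim_{L/k}\ \mathbf{hom}(EG \times_{G} \Sp(L),Z)
    }
\end{equation*}
in which the vertical maps are the canonical comparison maps into colimits of finite descent objects, and then show that the right vertical map and the bottom horizontal map are weak equivalences. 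These two facts together exhibit $j: X(k) \to Z(k)$ as weakly equivalent to the left vertical map, which is precisely the map in the statement.

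For the bottom map $j_{\ast}$, I would appeal directly to Corollary \ref{cor 19}. This requires that $Z$ is also a sectionwise $n$-type, which is guaranteed by the general fact (quoted in the discussion after Theorem \ref{th 1}) that the injective fibrant model of a sectionwise $n$-type is again a sectionwise $n$-type.

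For the right vertical map, the point is that $Z$, being injective fibrant, satisfies finite descent (Remark \ref{rem 17}): for each finite Galois extension $L/k$ with group $G$, the local weak equivalence $EG \times_{G} \Sp(L) \to \ast$ induces a weak equivalence $Z(k) \to \mathbf{hom}(EG \times_{G} \Sp(L),Z)$. Filtered colimits of weak equivalences of simplicial sets are weak equivalences, so passing to the filtered colimit over $L/k$ preserves this equivalence; this yields that the right vertical map is a weak equivalence.

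The main (and only) technical issue is making sure that Corollary \ref{cor 19} genuinely applies, which hinges on the preservation of sectionwise $n$-types under injective fibrant replacement; once that point is in hand the argument is a formal diagram chase, and there is no need to assume that $X$ itself satisfies finite descent (which is what distinguishes this statement from Corollary \ref{cor 20}).
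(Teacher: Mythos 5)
Your argument is correct and is essentially the paper's own: Corollary \ref{cor 21} is the \'etale translation of Corollary \ref{cor 16}, which the paper deduces from the same square as in the proof of Corollary \ref{cor 15}, using Corollary \ref{cor 19} (equivalently Corollary \ref{cor 10}) for the bottom map and finite descent for the injective fibrant object $Z$, together with the fact that filtered colimits preserve weak equivalences, for the right vertical map. Your added remark about $Z$ being a sectionwise $n$-type is harmless but not needed as a separate step, since it is already internal to the proof of Corollary \ref{cor 19}.
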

\bigskip

Now suppose that $X$ is a presheaf of Kan complexes on the finite \'etale site
$fet\vert_{k}$ of a field $k$. Let $\mathbf{P}_{n}X$ be the $n^{th}$ Postnikov section of $X$, with canonical map $p: X \to \mathbf{P}_{n}X$.

We construct a natural fibrant replacement $\eta: \mathbf{P}_{\ast}X \to L\mathbf{P}_{\ast}X$ for $\mathbf{P}_{\ast}X$ for the Postnikov tower in the category of towers of simplicial presheaves. This is done by inductively finding local weak
equivalences $\eta: \mathbf{P}_{n}X \to L\mathbf{P}_{n}X$ and injective
fibrations $q: L\mathbf{P}_{n}X \to L\mathbf{P}_{n-1}X$ such that the
diagrams
\begin{equation*}
\xymatrix{
\mathbf{P}_{n}X \ar[r]^{\eta} \ar[d]_{\pi} & L\mathbf{P}_{n}X \ar[d]^{q} \\
\mathbf{P}_{n-1}X \ar[r]_{\eta} & L\mathbf{P}_{n-1}X
}
\end{equation*}
commute.

Take an injective fibrant model $j: X \to Z$ for $X$, and form the diagram of simplicial set maps
\begin{equation}\label{eq 16}
  \xymatrix{
    \varinjlim_{L/k} \mathbf{hom}(EG \times_{G} \Sp(L),L\mathbf{P}_{n}X)
    \ar[r]^{j_{\ast}}
    & \varinjlim_{L/k} \mathbf{hom}(EG \times_{G} \Sp(L),L\mathbf{P}_{n}Z) \\
    \varinjlim_{L/k} \mathbf{hom}(EG \times_{G} \Sp(L),\mathbf{P}_{n}X) \ar[u]_{\simeq}^{\eta_{\ast}} \ar[r]^{j_{\ast}}_{\simeq}
& \varinjlim_{L/k} \mathbf{hom}(EG \times_{G} \Sp(L),\mathbf{P}_{n}Z) \ar[u]_{\eta_{\ast}}^{\simeq} \\
\mathbf{P}_{n}X(k) \ar[u]^{\alpha} \ar[r]^{j} & \mathbf{P}_{n}Z(k) \ar[u]_{\alpha} \\
X(k) \ar[u]^{p} \ar[r]_{j} & Z(k) \ar[u]_{p}
}
\end{equation}
The indicated maps are weak equivalences by Theorem \ref{th 18}. 

The diagram (\ref{eq 16}) can be interpreted as a commutative diagram
of pro-objects in simplicial sets, in which the maps $p$ are pro-equivalences. 
The vertical composites $\alpha \cdot p$ are the maps $\theta$ of the Introduction.

The weak equivalences
\begin{equation*}
  L\mathbf{P}_{n}Z(k) \xrightarrow{\simeq} \varinjlim_{L/k}\ \mathbf{hom}(EG \times_{G} \Sp(L),L\mathbf{P}_{n}Z)
\end{equation*}
give an equivalence of the vertical composite
\begin{equation*}
  \eta_{\ast}\cdot \alpha \cdot p: Z(k) \to \varinjlim_{L/k}\ \mathbf{hom}(EG \times_{G} \Sp(L),L\mathbf{P}_{n}Z)
\end{equation*}
with the pro-map $\eta \cdot p: Z(k) \to L\mathbf{P}_{\ast}Z(k)$.

\begin{lemma}\label{lem 22}
  Suppose that $\ell$ is a prime with $\ell \ne char(k)$.  Suppose
  that there is a uniform bound $N$ on the Galois cohomological
  dimension of $k$ with respect to $\ell$-torsion sheaves. Suppose that $Z$ is
  an injective fibrant object such 
  that each of the sheaves $\tilde{\pi}_{k}Z$ is $\ell^{m}$-torsion for
  some $m$. Then the map
  \begin{equation*}
    \eta \cdot p: Z(k) \to L\mathbf{P}_{\ast}Z(k)
  \end{equation*}
  is a pro-equivalence.
\end{lemma}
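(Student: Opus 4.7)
The plan is to reduce the pro-equivalence claim to a stabilization statement on homotopy groups: for each integer $r \geq 0$, the map $\pi_r(Z(k)) \to \pi_r(L\mathbf{P}_n Z(k))$ should become an isomorphism once $n$ is large enough (depending on $r$). Since $Z(k)$ is a constant pro-object and $L\mathbf{P}_{\ast}Z(k)$ is a tower of simplicial sets, this stabilization is exactly what makes the comparison a pro-isomorphism on homotopy groups, and hence, via the Postnikov-section characterization of pro-equivalences from \cite{pro}, a pro-equivalence of pro-simplicial-sets.

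First, I would invoke the homotopy descent spectral sequence (\ref{eq 4}) for the injective fibrant simplicial presheaves $Z$ and $L\mathbf{P}_n Z$ on the finite \'etale site of $k$. Both satisfy descent, so for $W \in \{Z, L\mathbf{P}_n Z\}$ there is a spectral sequence
\begin{equation*}
E_2^{s,t} = H^s_{Gal}(k, \tilde{\pi}_t W) \Longrightarrow \pi_{t-s}(W(k)).
\end{equation*}
The hypothesis that each $\tilde{\pi}_t Z$ is $\ell^m$-primary torsion, together with the uniform bound $N$ on the Galois cohomological $\ell$-dimension of $k$, forces $E_2^{s,t} = 0$ whenever $s > N$. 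The same vanishing holds for $L\mathbf{P}_n Z$, since $\tilde{\pi}_t L\mathbf{P}_n Z$ agrees with $\tilde{\pi}_t Z$ for $t \leq n$ and is trivial for $t > n$. Both spectral sequences therefore live in a strip of width $N+1$ in the $s$-direction and converge strongly.

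Next, I would compare the two spectral sequences via the natural map $Z \to L\mathbf{P}_n Z$, which factors as $Z \to \mathbf{P}_n Z \xrightarrow{\eta} L\mathbf{P}_n Z$. By construction the composite induces an isomorphism on $\tilde{\pi}_t$ for $0 \leq t \leq n$ and kills $\tilde{\pi}_t$ for $t > n$. For a fixed output degree $r = t - s$ only bidegrees with $0 \leq s \leq N$ and $r \leq t \leq r + N$ can contribute, so once $n \geq r + N$ the $E_2$-comparison is an isomorphism in every such bidegree. Strong convergence, combined with the fact that the induced filtrations on $\pi_r(W(k))$ have length at most $N+1$ independently of $n$, then yields the isomorphism $\pi_r(Z(k)) \xrightarrow{\cong} \pi_r(L\mathbf{P}_n Z(k))$ for $n \geq r + N$, which is exactly the desired stabilization.

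The main obstacle is keeping the comparison of descent spectral sequences honest and uniform in $n$: one must know that the cohomological dimension bound applies to $L\mathbf{P}_n Z$ and not merely to $Z$, and that convergence is strong enough that an isomorphism on $E_\infty$ in a fixed range translates into an isomorphism on $\pi_r$. The hypothesis $\operatorname{cd}_\ell(k) \leq N$ together with the $\ell$-primary torsion condition on $\tilde{\pi}_\ast Z$ is precisely what makes both of these routine, and it is where the lemma genuinely uses the assumption that cohomological dimension is globally bounded.
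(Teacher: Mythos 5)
Your reduction of the pro-equivalence to a stabilization statement (that $\pi_r(Z(k)) \to \pi_r(L\mathbf{P}_nZ(k))$ is an isomorphism for $n$ large relative to $r$, hence $\mathbf{P}_rZ(k) \to \mathbf{P}_rL\mathbf{P}_nZ(k)$ is a weak equivalence cofinally, hence the map of Postnikov towers is an Edwards--Hastings equivalence) is fine, and it is essentially how the paper concludes. The gap is in how you propose to prove the stabilization. You invoke the descent spectral sequence (\ref{eq 4}) for $Z$ itself, with abutment identified as $\pi_{t-s}(Z(k))$ and with strong convergence, on the grounds that ``$Z$ satisfies descent''. But that spectral sequence is the homotopy spectral sequence of the very tower $\{L\mathbf{P}_nZ(k)\}$ under discussion; its abutment is a priori $\pi_{\ast}$ of $\varprojlim_n L\mathbf{P}_nZ(k)$, and identifying this with $\pi_{\ast}(Z(k))$ (together with the $\varprojlim^{1}$ vanishing needed for strong convergence) is precisely the content of the lemma. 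The introduction flags exactly this convergence and identification problem for (\ref{eq 4}) and points to Section 4, i.e.\ to this lemma, as the device that circumvents it. Injective fibrancy of $Z$ gives descent statements such as Corollary \ref{cor 19}, but it does not hand you a strongly convergent spectral sequence abutting to $\pi_{\ast}(Z(k))$; assuming one is circular.

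The non-circular input your outline is missing is the paper's computation of the layers of the tower: the fibre of $L\mathbf{P}_nZ \to L\mathbf{P}_{n-1}Z$ over a vertex is injective fibrant with a single non-trivial sheaf of homotopy groups $A$ in degree $n$, which is $\ell^{m}$-torsion, so its global sections have $\pi_s \cong H^{n-s}_{et}(k,A)$, vanishing for $s < n-N$. Hence the tower maps $L\mathbf{P}_nZ(k) \to L\mathbf{P}_{n-1}Z(k)$ become highly connected, which yields both the stabilization of the tower and the identification $Z(k) \simeq \varprojlim_n L\mathbf{P}_nZ(k)$ (using the uniform cohomological bound over all finite separable extensions $L/k$). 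Your spectral-sequence comparison is legitimate when applied to the truncations $L\mathbf{P}_nZ$ versus $L\mathbf{P}_mZ$, where the towers are finite and convergence is free, and that does give stabilization of the tower; but linking $Z(k)$ itself to the tower cannot be done by fiat: the map $\mathbf{P}_nZ(k) \to L\mathbf{P}_nZ(k)$, through which $Z(k) \to L\mathbf{P}_nZ(k)$ factors, is not known to be a weak equivalence --- that is exactly the Galois descent question for the truncation $\mathbf{P}_nZ$. A secondary point: the Bousfield--Kan spectral sequence is fringed and the spaces here are neither pointed nor connected, which is why the paper argues vertex by vertex with the fibres $q^{-1}(q(y))$ over the sheaf of vertices rather than through a spectral sequence comparison.
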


\begin{remark}
  The uniform bound assumption implies that if $L/k$ is any finite separable extension and $x \in Z(L)$ is a vertex, then $H^{p}_{et}(L,\tilde{\pi}_{k}(Z\vert_{L},x)) = 0$ for $p >N$. 
  Here, $Z\vert_{L}$ is the restriction of the simplicial presheaf $Z$ to the finite \'etale site of $L$.

  In effect, $\tilde{\pi}_{k}(Z\vert_{L},x)$ is an $\ell^{m}$-torsion sheaf, and the cohomological dimension of $L$ with respect to $\ell^{m}$-torsion sheaves is bounded above by that of $k$, by a Shapiro's Lemma argument \cite[Sec 3.3]{Serre-CG2}.

The existence of a global bound in Galois cohomological dimension of Lemma \ref{lem 22} is commonly met in practice, such as for the mod $\ell$ $K$-theory presheaves $(\mathbf{K}/\ell)^{n}$, when defined over fields $k$ that arise from finite dimensional objects and $\ell \ne 2$. See Thomason's paper \cite{AKTEC}.
\end{remark}

\begin{proof}[Proof of Lemma \ref{lem 22}] 
  All presheaves $\mathbf{P}_{n}Z$ have the same presheaf of vertices, namely $Z_{0}$, and there is a pullback diagram of simplicial presheaves
  \begin{equation*}
    \xymatrix{
      K(\pi_{n}Z,n) \ar[r] \ar[d] & \mathbf{P}_{n}Z \ar[d] \\
      Z_{0} \ar[r] & \mathbf{P}_{n-1}Z
    }
    \end{equation*}
  which defines the object $K(\pi_{n}Z,n)$. In sections, the fibre of the map
\begin{equation*}
  K(\pi_{n}Z,n)(U) \to Z_{0}(U)
\end{equation*}
  over the vertex $x \in Z_{0}(U)$ is the space $K(\pi_{n}(Z(U),x),n)$.

  Form the diagram
  \begin{equation*}
    \xymatrix{
    K(\pi_{n}Z,n) \ar[r]^{j}_{\simeq} \ar[d] & LK(\pi_{n}Z,n) \ar[d]^{q} \\
    Z_{0} \ar[r]_{j}^{\simeq} & \tilde{Z}_{0}
    }
    \end{equation*}
  where the maps labelled by $j$ are injective fibrant models and $q$ is an injective fibration. The fibrant model $j: Z_{0} \to \tilde{Z}_{0}$ can be identified with the associated sheaf map.

Suppose that $y \in LK(\pi_{n}Z,n)(k)_{0}$. There is a finite separable extension $L/k$ such that $q(y) \in \tilde{Z}_{0}(L)$ is in the image of the map $j: Z_{0}(L) \to \tilde{Z}_{0}(L)$, meaning that $q(y\vert_{L}) = j(z)$ for some $z \in Z_{0}(L)$.
  
  Form the pullback diagram
    \begin{equation*}
    \xymatrix{
      q^{-1}(q(y)) \ar[r] \ar[d] & LK(\pi_{n}Z,n) \ar[d]^{q} \\
      \ast \ar[r]_{q(y)} & \tilde{Z}_{0}
    }
    \end{equation*}
    Then
    \begin{equation*}
      \pi_{k}(q^{-1}(q(y))(k),y) = \pi_{k}(LK(\pi_{n}Z,n)(k),y).
    \end{equation*}
    The simplicial presheaf $q^{-1}(q(y))$ is injective fibrant, and has one non-trivial sheaf of homotopy groups, say $A$, in degree $n$. The sheaf $A$ is $\ell^{m}$-torsion, since its restriction to $fet\vert_{L}$ is the sheaf associated to the presheaf $\pi_{n}(Z\vert_{L},z)$, which is $\ell^{m}$-torsion.

    We therefore have isomorphisms
    \begin{equation*}
      \pi_{s}(LK(\pi_{n}Z,n)(k),y) = \pi_{s}(q^{-1}(q(y))(k),y) \cong
      \begin{cases}
        H^{n-s}_{et}(k,A) & \text{if $s \leq n$, and}\\
        0 & \text{otherwise}
      \end{cases}
    \end{equation*}
    (see \cite[Prop 8.32]{LocHom}, and the proof of Corollary \ref{cor 10}).
In particular, the homotopy groups $\pi_{s}(LK(\pi_{n}Z,n)(k),y)$ vanish for $s < n-N$.

It follows that the map
\begin{equation*}
    \varprojlim_{m}\ L\mathbf{P}_{m}Z(k) \to L\mathbf{P}_{n}Z(k)
  \end{equation*}
  induces a weak equivalence
\begin{equation*}
  \mathbf{P}_{r}(\varprojlim_{m}\ L\mathbf{P}_{m}Z(k)) \to \mathbf{P}_{r}(L\mathbf{P}_{n}Z(k))
\end{equation*}
for $n$ sufficiently large, and this is true for each $r$.

It also follows that the simplicial set map
\begin{equation*}
  Z(k) \to \varprojlim_{n}\ L\mathbf{P}_{n}Z(k)
\end{equation*}
is a weak equivalence, and that the map
\begin{equation*}
  \varprojlim_{n}\ L\mathbf{P}_{n}Z(k) \to L\mathbf{P}_{\ast}Z(k)
\end{equation*}
is a pro-equivalence.

The composite
\begin{equation*}
  Z(k) \to \varprojlim_{n}\ L\mathbf{P}_{n}Z(k) \to L\mathbf{P}_{\ast}Z(k)
\end{equation*}
is the map $\eta\cdot p$, and is a pro-equivalence.
\end{proof}

Thus, in the presence of a global bound on cohomological dimension as in Lemma \ref{lem 22}, we see that,
{\it with the
exception of} the maps $j: X(k) \to Z(k)$ and
\begin{equation*}
\alpha: \mathbf{P}_{n}X(k) \to 
\varinjlim_{L/k}\mathbf{hom}(EG \times_{G} \Sp(L),\mathbf{P}_{n}X),
\end{equation*}
the maps in
the diagram (\ref{eq 16}) are pro-equivalences.

The simplicial set
map $X(k) \to Z(k)$ is a weak equivalence if and only if it is a
pro-equivalence, by Lemma \ref{lem 12}. We have the
following consequence:

\begin{theorem}\label{th 24}
Suppose that $X$ is a presheaf of Kan complexes on the finite \'etale site
$fet\vert_{k}$ of a field $k$, such that the presheaves $\pi_{s}X$ are $\ell^{n}$-torsion for some $n$ and some prime $\ell \ne 2$, which is also distinct from the characteristic of $k$. Let $j: X \to Z$ be an injective
fibrant model of $X$. Suppose that there is a uniform bound on the
Galois cohomological dimension of $k$ for $\ell$-torsion sheaves.

Then the map
$j: X(k) \to Z(k)$ in global sections is a weak equivalence if and
only if the map of towers
\begin{equation*}
\alpha: \mathbf{P}_{n}X(k) \to 
\varinjlim_{L/k}\mathbf{hom}(EG \times_{G} \Sp(L),\mathbf{P}_{n}X)
\end{equation*}
is a pro-equivalence in simplicial sets.
\end{theorem}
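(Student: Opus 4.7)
The plan is to read diagram (\ref{eq 16}) as a commutative diagram in the pro-category of simplicial sets, with the Postnikov index $n$ as the pro-direction, and to track which of its arrows are pro-equivalences.

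First I would identify the pro-equivalences already present in the diagram, that is, everything except the bottom horizontal $j$ and the leftmost $\alpha$. The vertical $p$ maps in both columns are pro-equivalences since for any simplicial presheaf $Y$ the canonical comparison $Y \to \mathbf{P}_\ast Y$ is a pro-equivalence, and evaluating at the terminal object preserves this. The $\eta_\ast$ arrows are weak equivalences by Theorem \ref{th 18} applied to the local weak equivalences $\eta: \mathbf{P}_nX \to L\mathbf{P}_nX$ and $\eta: \mathbf{P}_nZ \to L\mathbf{P}_nZ$. The middle horizontal $j_\ast$ is a weak equivalence by Corollary \ref{cor 19}, and the top horizontal $j_\ast$ is then a weak equivalence by two-out-of-three against the $\eta_\ast$.

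Next I would invoke the cohomological dimension hypothesis to control the right column. Since $j: X \to Z$ is a local weak equivalence, the sheaves $\tilde\pi_k Z \cong \tilde\pi_k X$ are $\ell^n$-torsion, so Lemma \ref{lem 22} applies to $Z$ and gives that $\eta\cdot p: Z(k) \to L\mathbf{P}_\ast Z(k)$ is a pro-equivalence. Combined with the sectionwise weak equivalences
\begin{equation*}
L\mathbf{P}_nZ(k) \xrightarrow{\simeq} \varinjlim_{L/k}\mathbf{hom}(EG\times_G \Sp(L),L\mathbf{P}_nZ),
\end{equation*}
which hold because $L\mathbf{P}_nZ$ is injective fibrant and a sectionwise $n$-type (Corollary \ref{cor 20} applied to $L\mathbf{P}_nZ$ as its own injective fibrant model), it follows that the right column composite $\eta_\ast\cdot\alpha\cdot p$ is a pro-equivalence of towers.

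With this in hand, commutativity of (\ref{eq 16}) and cancellation along pro-equivalences show that the bottom map $j: X(k)\to Z(k)$ is a pro-equivalence if and only if the left column composite $\eta_\ast\cdot\alpha\cdot p: X(k) \to \varinjlim_{L/k}\mathbf{hom}(EG\times_G \Sp(L),L\mathbf{P}_\ast X)$ is a pro-equivalence. Since the left $p$ and $\eta_\ast$ are already pro-equivalences, this reduces to the condition that
\begin{equation*}
\alpha: \mathbf{P}_nX(k) \to \varinjlim_{L/k}\mathbf{hom}(EG\times_G \Sp(L),\mathbf{P}_nX)
\end{equation*}
is a pro-equivalence. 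Finally, Corollary \ref{cor 13} converts the pro-equivalence condition for the constant tower $j: X(k)\to Z(k)$ into an ordinary weak equivalence of simplicial sets, giving the equivalence claimed in Theorem \ref{th 24}. The substantive input is Lemma \ref{lem 22}; the main thing to be careful about is keeping straight which horizontal and vertical arrows are pro-equivalences as the tower index $n$ varies, and remembering that, thanks to Corollary \ref{cor 13}, the constant tower $j$ being a pro-equivalence is the same as its being a weak equivalence.
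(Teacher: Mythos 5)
Your proposal is correct and follows essentially the same route as the paper: it reads diagram (\ref{eq 16}) as a diagram of pro-objects in simplicial sets, uses Theorem \ref{th 18} (with Corollary \ref{cor 19}) for the levelwise equivalences, Lemma \ref{lem 22} for the right-hand column via the identification of $\eta_{\ast}\cdot\alpha\cdot p$ with $\eta\cdot p: Z(k)\to L\mathbf{P}_{\ast}Z(k)$, and then two-out-of-three plus Corollary \ref{cor 13} to pass between pro-equivalence and weak equivalence of the constant object $j: X(k)\to Z(k)$. The only slip is cosmetic: the sectionwise equivalences $L\mathbf{P}_{n}Z(k)\to \varinjlim_{L/k}\mathbf{hom}(EG\times_{G}\Sp(L),L\mathbf{P}_{n}Z)$ come from finite descent for the injective fibrant objects $L\mathbf{P}_{n}Z$ (Remark \ref{rem 17}, or Corollary \ref{cor 21}), rather than from Corollary \ref{cor 20}.
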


\begin{remark}
The statement of Theorem \ref{th 24} is only an illustration. In geometric cases, one can refine the inclusion $k \subset k_{sep}$ of the field $k$ in its separable closure into a sequence of Galois subextensions
\begin{equation*}
  k=L_{0} \subset L_{1} \subset \dots \subset L_{N} = k_{sep}
\end{equation*}
such that each of the Galois extensions $L_{i+1}/L_{i}$ has Galois cohomological dimension $1$ with respect to $\ell$-torsion sheaves --- see Section 7.7 of \cite{GECT}. Then there is a statement analogous to Theorem \ref{th 24} for the finite Galois subextensions $L/L_{i}$ of $L_{i+1}/L_{i}$.

Historically, the use of this decomposition was meant to break up the problem of proving the Lichtenbaum-Quillen conjecture into proving descent statements in relative Galois cohomological dimension $1$. This attack on the conjecture was never successfully realized.
\end{remark}

\nocite{BrGe}

\vfill\eject

\bibliographystyle{plain} 
\bibliography{spt}

\begin{thebibliography}{10}

\bibitem{SGA4-2}
{\em Th\'eorie des topos et cohomologie \'etale des sch\'emas. {T}ome 2}.
\newblock Lecture Notes in Mathematics, Vol. 270. Springer-Verlag, Berlin,
  1972.
\newblock S{\'e}minaire de G{\'e}om{\'e}trie Alg{\'e}brique du Bois-Marie
  1963--1964 (SGA 4), Dirig{\'e} par M. Artin, A. Grothendieck et J. L.
  Verdier. Avec la collaboration de N. Bourbaki, P. Deligne et B. Saint-Donat.

\bibitem{AM}
M.~Artin and B.~Mazur.
\newblock {\em Etale homotopy}.
\newblock Lecture Notes in Mathematics, No. 100. Springer-Verlag, Berlin, 1969.

\bibitem{BF}
A.~K. Bousfield and E.~M. Friedlander.
\newblock Homotopy theory of {$\Gamma $}-spaces, spectra, and bisimplicial
  sets.
\newblock In {\em Geometric applications of homotopy theory (Proc. Conf.,
  Evanston, Ill., 1977), II}, volume 658 of {\em Lecture Notes in Math.}, pages
  80--130. Springer, Berlin, 1978.

\bibitem{BK}
A.~K. Bousfield and D.~M. Kan.
\newblock {\em Homotopy limits, completions and localizations}.
\newblock Springer-Verlag, Berlin, 1972.
\newblock Lecture Notes in Mathematics, Vol. 304.

\bibitem{BrGe}
Kenneth~S. Brown and Stephen~M. Gersten.
\newblock Algebraic {$K$}-theory as generalized sheaf cohomology.
\newblock In {\em Algebraic K-theory, I: Higher K-theories (Proc. Conf.,
  Battelle Memorial Inst., Seattle, Wash., 1972)}, pages 266--292. Lecture
  Notes in Math., Vol. 341. Springer, Berlin, 1973.

\bibitem{Godement}
Roger Godement.
\newblock {\em Topologie alg\'ebrique et th\'eorie des faisceaux}.
\newblock Hermann, Paris, 1973.
\newblock Troisi\`eme \'edition revue et corrig\'ee, Publications de l'Institut
  de Math\'ematique de l'Universit\'e de Strasbourg, XIII, Actualit\'es
  Scientifiques et Industrielles, No. 1252.

\bibitem{GJ}
P.~G. Goerss and J.~F. Jardine.
\newblock {\em Simplicial {H}omotopy {T}heory}, volume 174 of {\em Progress in
  Mathematics}.
\newblock Birkh\"auser Verlag, Basel, 1999.

\bibitem{Goerss1}
Paul~G. Goerss.
\newblock Homotopy fixed points for {G}alois groups.
\newblock In {\em The \v {C}ech centennial ({B}oston, {MA}, 1993)}, volume 181
  of {\em Contemp. Math.}, pages 187--224. Amer. Math. Soc., Providence, RI,
  1995.

\bibitem{FGA}
Alexander Grothendieck.
\newblock {\em Fondements de la g\'eom\'etrie alg\'ebrique. [{E}xtraits du
  {S}\'eminaire {B}ourbaki, 1957--1962.]}.
\newblock Secr\'etariat math\'ematique, Paris, 1962.

\bibitem{Ill01}
Luc Illusie.
\newblock {\em Complexe cotangent et d\'eformations. {I}}.
\newblock Lecture Notes in Mathematics, Vol. 239. Springer-Verlag, Berlin,
  1971.

\bibitem{Isaksen3}
Daniel~C. Isaksen.
\newblock Strict model structures for pro-categories.
\newblock In {\em Categorical decomposition techniques in algebraic topology
  ({I}sle of {S}kye, 2001)}, volume 215 of {\em Progr. Math.}, pages 179--198.
  Birkh\"auser, Basel, 2004.

\bibitem{GECT}
J.~F. Jardine.
\newblock {\em {G}eneralized \'{E}tale {C}ohomology {T}heories}, volume 146 of
  {\em Progress in Mathematics}.
\newblock Birkh\"auser Verlag, Basel, 1997.

\bibitem{pro}
J.~F. Jardine.
\newblock Model structures for pro-simplicial presheaves.
\newblock {\em J. K-Theory}, 7(3):499--525, 2011.

\bibitem{cosimp}
J.~F. Jardine.
\newblock Cosimplicial spaces and cocycles.
\newblock {\em J. Inst. Math. Jussieu}, 15(3):445--470, 2016.

\bibitem{LocHom}
J.F. Jardine.
\newblock {\em {L}ocal {H}omotopy {T}heory}.
\newblock Springer Monographs in Mathematics. Springer-Verlag, New York, 2015.

\bibitem{MV}
Fabien Morel and Vladimir Voevodsky.
\newblock {${\bf A}\sp 1$}-homotopy theory of schemes.
\newblock {\em Inst. Hautes \'Etudes Sci. Publ. Math.}, 90:45--143 (2001),
  1999.

\bibitem{Serre-CG2}
Jean-Pierre Serre.
\newblock {\em Galois cohomology}.
\newblock Springer Monographs in Mathematics. Springer-Verlag, Berlin,
  {E}nglish edition, 2002.
\newblock Translated from the French by Patrick Ion and revised by the author.

\bibitem{Simpson-descent}
Carlos~T. Simpson.
\newblock Descent.
\newblock In {\em Alexandre {G}rothendieck: a mathematical portrait}, pages
  83--141. Int. Press, Somerville, MA, 2014.

\bibitem{Su1}
A.~Suslin.
\newblock On the {$K$}-theory of algebraically closed fields.
\newblock {\em Invent. Math.}, 73(2):241--245, 1983.

\bibitem{SV2}
Andrei Suslin and Vladimir Voevodsky.
\newblock Bloch-{K}ato conjecture and motivic cohomology with finite
  coefficients.
\newblock In {\em The arithmetic and geometry of algebraic cycles (Banff, AB,
  1998)}, volume 548 of {\em NATO Sci. Ser. C Math. Phys. Sci.}, pages
  117--189. Kluwer Acad. Publ., Dordrecht, 2000.

\bibitem{Su2}
Andrei~A. Suslin.
\newblock On the {$K$}-theory of local fields.
\newblock In {\em Proceedings of the {L}uminy conference on algebraic
  {$K$}-theory ({L}uminy, 1983)}, volume~34, pages 301--318, 1984.

\bibitem{AKTEC}
R.~W. Thomason.
\newblock Algebraic {$K$}-theory and \'etale cohomology.
\newblock {\em Ann. Sci. \'Ecole Norm. Sup. (4)}, 18(3):437--552, 1985.

\bibitem{Voev4}
Vladimir Voevodsky.
\newblock On motivic cohomology with {$\bold Z/l$}-coefficients.
\newblock {\em Ann. of Math. (2)}, 174(1):401--438, 2011.

\bibitem{Weil}
A.~Weil.
\newblock {\em Foundations of Algebraic Geometry}.
\newblock Number v. 29 in American {M}athematical {S}ociety {C}olloquium
  {P}ublications. American Mathematical Society, 1962.

\end{thebibliography}

\end{document}